\begin{document}

\newcommand{\add}{\operatorname{add}\nolimits}
\newcommand{\ind}{\operatorname{ind}\nolimits}
\newcommand{\Hom}{\operatorname{Hom}\nolimits}
\newcommand{\Ext}{\operatorname{Ext}\nolimits}
\newcommand{\C}{\operatorname{\mathcal C}\nolimits}
\newcommand{\Z}{\operatorname{\mathbb Z}\nolimits}
\newcommand{\R}{\operatorname{\mathbb R}\nolimits}

\newcommand{\Tn}{{\mathcal T}_n}
\newcommand{\T}{\operatorname{{\mathcal T}}\nolimits}
\newcommand{\Tfive}{{\mathcal T}_5}
\newcommand{\Tinf}{{\mathcal T}_{\infty}}
\newcommand{\CTn}{\operatorname{\mathcal C}_{{\mathcal T}_n}\nolimits}
\newcommand{\CTinf}{\operatorname{\mathcal C}_{{\mathcal T}_\infty}\nolimits}
\newcommand{\D}{\operatorname{\rm D}\nolimits}
\newcommand{\GT}{\Gamma_{\mathcal T_n}}
\newcommand{\GA}{\Gamma(\An)}
\renewcommand{\P}{\operatorname{\mathbb P}\nolimits}

\newcommand{\An}{\mathbb{A}(n)}
\newcommand{\A}{\operatorname{{\mathbb{A}}}\nolimits}
\newcommand{\U}{\mathbb{U}}
\newcommand{\Cn}{\mathrm{Cyl}(n)}

\renewcommand{\r}{\operatorname{\underline{r}}\nolimits}

\newtheorem{lemma}{Lemma}[section]
\newtheorem{prop}[lemma]{Proposition}
\newtheorem{corollary}[lemma]{Corollary}
\newtheorem{theorem}[lemma]{Theorem}
\newtheorem{example}[lemma]{Example}

\theoremstyle{definition}
\newtheorem{definition}[lemma]{Definition}
\newtheorem{remark}[lemma]{Remark}

\title[A geometric model of tube categories]{A geometric model of tube categories}

\author[Baur]{Karin Baur}
\address{
Institut f\"ur Mathematik und wissenschaftliches Rechnen \\
Universit\"at Graz \\
Heinrichstrasse 36 \\
A-8010 Graz \\
Austria
}
\email{baurk@uni-graz.at}

\author[Marsh]{Robert J. Marsh}
\address{School of Mathematics \\
University of Leeds \\
Leeds LS2 9JT \\
England
}
\email{marsh@maths.leeds.ac.uk}

\keywords{Cluster tube, cluster category, tube, annulus, oriented arcs, triangulation, universal covering space, geometric intersection number, derived category, translation quiver}

\begin{abstract}
We give a geometric model for a tube category in terms of homotopy classes of oriented arcs in an annulus with marked points on its boundary. In particular, we interpret the dimensions of extension groups of degree $1$ between indecomposable objects in terms of negative geometric intersection numbers between corresponding arcs, giving a geometric interpretation of the description of an extension group in the cluster category
of a tube as a symmetrized version of the extension group in the tube. We show that a similar result holds for finite dimensional representations of the linearly oriented quiver of type $A_{\infty}^{\infty}$.
\end{abstract}

\date{24 April 2012}

\subjclass[2010]{Primary 16G10, 16G20, 16G70, 18E30, 14C17; Secondary 13F60.}

\maketitle

\section{Introduction}

Let $k$ be an algebraically closed field. A \textit{tube category} $\Tn$ of rank $n$ over $k$ can be defined as the category of finite dimensional nilpotent representations of an oriented $n$-cycle over $k$. We introduce a geometric model of such a category using an annulus with $n$ marked points on the outer
boundary and none on the inner boundary. The indecomposable objects of $\Tn$ are in bijection with certain oriented arcs in the annulus, i.e.\ those which are oriented anticlockwise and which are not homotopic to a boundary arc between successive marked points; we refer to them as admissible arcs.
The AR-quiver of the tube can be realised via the geometry of the arcs. We then show that:

\begin{theorem} \label{t:mainresult}
Let $\alpha$ and $\beta$ be admissible arcs in an annulus with $n$ marked points on its outer boundary, and let $A$ and $B$ be
the corresponding indecomposable objects in a tube $\Tn$ of rank $n$. Then the dimension of $\Ext^1_{\Tn}(A,B)$ coincides with the negative geometric intersection number of the pair $\alpha,\beta$.
\end{theorem}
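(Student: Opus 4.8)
The plan is to compute both quantities explicitly as functions of combinatorial parameters attached to $A$ and $B$ (equivalently, to $\alpha$ and $\beta$) and then to verify that the two expressions agree. First I would fix, in a convenient normal form, the bijection between indecomposables and admissible arcs: an indecomposable of $\Tn$ is determined by a pair consisting of a marked point on the outer boundary (its position) and a positive integer (its quasi-length), and the corresponding admissible arc is the anticlockwise arc whose two endpoints on the outer boundary and whose winding number around the inner boundary are dictated by this pair. I would record the action of $\tau$ and the ray/coray structure of the tube in these coordinates, since both sides of the claimed equality will ultimately be expressed through them.

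For the algebraic side, I would produce a closed combinatorial formula for $\dim\Ext^1_{\Tn}(A,B)$. The cleanest route is the Auslander--Reiten formula $\Ext^1_{\Tn}(A,B)\cong\D\Hom_{\Tn}(B,\tau A)$ together with the standard description of $\Hom$-spaces in a tube: each indecomposable is uniserial, so a nonzero map is pinned down by matching a quotient of the source to a submodule of the target. This yields $\dim\Ext^1_{\Tn}(A,B)$ as an explicit function of the two positions and the two quasi-lengths.

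For the geometric side, I would pass to the universal cover of the annulus --- an infinite strip carrying a $\Z$-indexed set of marked points on the lift of the outer boundary, with deck group generated by translation by $n$. Each admissible arc lifts to a $\Z$-orbit of arcs in the strip. Because the strip is simply connected, any two lifts (one of $\alpha$, one of $\beta$) with four distinct endpoints meet minimally in either zero or one point, and they meet precisely when their endpoints interleave along the boundary; the sign of such a crossing is read off from the relative orientation of the two oriented arcs at the crossing. Fixing one lift $\tilde\alpha$ of $\alpha$, the minimal number of negative crossings of $\alpha$ and $\beta$ then equals the number of lifts of $\beta$ that cross $\tilde\alpha$ negatively, which I would express, as a second explicit function of the positions and quasi-lengths, from the interleaving and orientation conditions.

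The final step is to check that the two functions coincide, which I expect to reduce to an elementary but slightly delicate comparison of interval and inequality conditions. The main obstacle is the geometric side: getting the sign conventions exactly right so that it is the \emph{negative} (rather than positive or total) crossings that are counted --- this is precisely the asymmetry distinguishing $\Ext^1_{\Tn}(A,B)$ from its symmetrisation in the cluster category --- and confirming that the chosen lifts are genuinely in minimal position so that no crossing is created or destroyed under homotopy. As a consistency check kept in reserve, one can argue inductively along the Auslander--Reiten structure: verify the equality on a base family (for instance when $A$ is simple, where both sides are immediate), and then show that the mesh relations of the tube correspond to local modifications of arcs changing the negative crossing number identically on both sides. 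I would use the direct universal-cover computation as the main argument.
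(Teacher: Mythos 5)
Your proposal is correct and follows essentially the same route as the paper: the algebraic side via the Auslander--Reiten formula $\Ext^1_{\Tn}(A,B)\cong D\Hom_{\Tn}(B,\tau A)$ combined with uniseriality, and the geometric side by lifting to the universal cover, counting the $\sigma$-translates of a lift of $\beta$ whose endpoints interleave with a fixed lift of $\alpha$, and checking minimal position. The delicate points you flag (sign conventions, and that the annulus crossing number equals the count over lifts in the strip) are exactly the content of the paper's Lemma~\ref{l:lift}, Corollary~\ref{c:liftbound} and Proposition~\ref{p:intersectionnumbers}, which resolve them by exhibiting explicit piecewise-linear representatives realizing the lower bound.
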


Cluster categories were introduced in~\cite{bmrrt} (and in~\cite{ccs} for
type $A$) in order to model the cluster algebras~\cite{fz1} of Fomin-Zelevinsky. They have since been used widely in representation theory
(see e.g.~\cite{k2} for a survey).
Since a tube is skeletally small it follows from Keller's theorem~\cite[\S9.9]{k1} that
the cluster category $\CTn$ associated to a tube (which we call a cluster tube) is triangulated (see also~\cite[\S5]{bkl1} and~\cite[\S2]{bmv}).
We show that the geometric intersection number of $\alpha,\beta$ is equal
to the sum of the negative geometric intersection number and the positive geometric intersection
number.
This total number coincides with the dimension of $\Ext^1_{\CTn}(A,B)$.
We thus obtain a geometric interpretation of the fact that
$$\Ext_{\CTn}(A,B)\cong \Ext_{\Tn}(A,B)\oplus D\Ext_{\Tn}(B,A),$$
(which can be proved as in~\cite[Prop. 1.7]{bmrrt})
for indecomposable objects $A$ and $B$ in $\Tn$, where $D=\Hom_k(-,k)$.
We also show that similar results hold for the category of finite dimensional representations of the linear orientation of the quiver $A_{\infty}^{\infty}$, a hereditary category whose AR-quiver is of type $\mathbb{Z}A_{\infty}$.

We note that geometric models of cluster categories and of the generalised
cluster categories of Amiot~\cite{a} use unoriented arcs in marked
surfaces: see~\cite{bz,ccs,hj,sc}.
Such models were used for cluster algebras in~\cite{fz2,fst}. Our perspective here is that by using oriented arcs we can model the classical categories $\Tn$ arising in representation theory. The $2$-Calabi-Yau $\Ext^1$-symmetry of the cluster category of $\Tn$ then arises by forgetting the orientation on the arcs. We note that oriented arcs have also been used in~\cite{bm} to obtain a geometric model for the root category~\cite{h} of type $A$ (the quotient of the bounded derived category by the square of the shift).

We remark that~\cite{g} and~\cite[3.18]{bz} give a construction of the AR-quiver of a cluster tube (in the latter case, as a subcategory of a generalised cluster category in a more general situation)
using unoriented arcs in an annulus. It follows from~\cite[5.3]{bz} that if the geometric intersection number of two non-homotopic admissible arcs in the annulus is non-zero then the $\Ext^1$-group between the corresponding objects in the cluster tube is non-zero, and it follows from~\cite[5.1]{bz} that the geometric intersection number of an arc with itself vanishes (see Remark~\ref{rem:selfintersection}) if and only if the $\Ext^1$-group of the corresponding object with itself in the cluster tube vanishes. Thus, in the case of the cluster tube, we obtain a strengthening of these results.

In Section $2$ we recall tube categories and indicate how the AR-quiver of
the tube can be constructed geometrically using admissible arcs in the annulus. In Section $3$ we compute geometric intersection numbers between admissible arcs in the annulus by lifting them to the universal covering space, and prove the main result.
In Section $4$ we give an interpretation of our model using doubled arcs which,
in particular, gives a connection with a model for the rigid part of the
cluster tube given in~\cite[\S3]{bmv}. In Section 5 we consider the
case of the category of finite dimensional representations of the linear orientation of $A_{\infty}^{\infty}$.

Since completing this work, we learnt of interesting independent work in
the diploma thesis~\cite{w} which, in particular, gives a bijection
between string modules over a quiver of type $\widetilde{A_n}$
(i.e. a quiver whose underlying graph is a cycle, but is not an oriented
cycle itself) and certain oriented arcs in the annulus (regarded as a
cylinder). A geometric interpretation of the AR-quiver is given and it is
shown that the negative geometric intersection number of two arcs coincides with the
dimension of the extension group of the corresponding modules.
Since tubes can be obtained as subcategories of the module categories of such
quivers, this gives an alternative proof of Theorem~\ref{t:mainresult}.

Our approach includes a topological explanation of how geometric intersection numbers in
the annulus and its universal cover are related (see Section 3). In addition, it
links up interpretations of geometric intersection numbers with and without
orientation, thus strengthening results of~\cite{bz} in this case.
We also give an explanation for the doubled arc model for exceptional objects
in the tube appearing in~\cite{bmv} and generalise to the
$\mathbb{Z}A_{\infty}$ case using the universal covering of the annulus.

\section{The AR-quiver of a tube}
\subsection{Categories of tube shape} \label{ssec:tubes}

We refer to~\cite{ars} for background on the theory of finite dimensional
algebras and their representations.
For more information on tubes, see~\cite{ri,ss}.
A stable translation quiver $\Gamma$ without multiple arrows is said to be of
{\em tube type} if it is of the form $\Gamma=\Z A_{\infty}/n$ for some $n>0$, cf.~\cite[\S3]{ri}.
In that case we say that the tube $\Gamma$ has rank $n$ and
we call the additive hull of the mesh category of $\Gamma$ a
{\em tube of rank $n$}, denoting it $\Tn$. If $C_n$ is a cyclically oriented
quiver with $n$ vertices then the category of finite-dimensional nilpotent
representations of $kC_n$ is equivalent to $\Tn$
(this follows from~\cite[3.6(6)]{ri}; see also~\cite[III.1.1]{rv}).

The category $\Tn$ is Hom-finite, abelian, hereditary and
uniserial~\cite[\S 3]{ri}. We have the duality
$\Ext_{\Tn}^1(X,Y)\cong D\Hom_{\Tn}(Y,\tau X)$ in $\Tn$,
where $\tau$ denotes the AR-translate.
We can form its
cluster category as in~\cite[\S1]{bmrrt}, obtaining
the {\em cluster tube} $\CTn$ of rank $n$ as the orbit category
$\D^b(\Tn)/\tau^{-1}[1]$.
As noted above, Keller's theorem~\cite[\S9.9]{k1} applies,
so the cluster tube is triangulated. This category has also been
studied in~\cite{bkl1,bkl2,bmv}.
Its AR-quiver is the same as the AR-quiver of
$\Tn$ (arguing as in~\cite[Prop.\ 1.3]{bmrrt}). To abbreviate we write
$\GT:=\Gamma_{\rm AR}(\Tn)$ for the AR-quiver of $\Tn$.

\subsection{Arcs in the annulus and its universal cover}
\label{ssec:universalcover}

Let $\An=\mathbb{A}(n,0)$ be an annulus with $n$ marked points on the outer
boundary. Our goal is to use oriented arcs in $\An$ as a geometric model
for the tube and the cluster tube as introduced in Subsection~\ref{ssec:tubes}.

\begin{remark}
In~\cite[Example 6.9]{fst}, the authors consider annuli of the form $\mathbb{A}(r,s)$ with $r,s\geq 1$. These
have $r$ marked points on the outer boundary, $s$ marked points on the inner
boundary and unoriented arcs between these marked points.
More generally, the authors study triangulated surfaces as a geometric model
for cluster algebras. In particular, the annuli $\mathbb{A}(r,s)$ give rise to
cluster algebras associated to quivers of type $\widetilde{A}_{r+s-1}$ with
$r$ arrows in one direction and $s$ arrows in the other direction.
Thus it makes sense to use the annulus $\An$, with
$n$ marked points on one boundary component only, to model $\Tn$, whose objects
can be considered as (nilpotent) representations of a cyclic quiver with $n$ arrows all
in the same direction.
\end{remark}

We consider smooth oriented arcs in surfaces with boundary.
Recall that two such arcs $\alpha$, $\beta$ which start
at the same point and finish at the same point are said to be {\em homotopy
equivalent} if there is a homotopy between them fixing the starting
point and the finishing point. We write $\alpha\sim\beta$ to denote this.

Let us now consider oriented arcs between marked points of the boundary
of $\An$. We start by labelling the marked points of $\An$ with $0,1,\dots,n-1$,
in an anti-clockwise direction.
We will always take arcs up to homotopy fixing the starting and ending points.
For simplicity, we will assume that the marked points are equally spaced
around the outer boundary of $\An$.
An oriented arc from a marked point $i$ to a marked point $l$, for
$i,l\in\{0,1,\dots,n-1\}$,
is homotopically equivalent to the boundary
arc which starts at $i$, goes around the boundary $k$ times
(for some $k\ge 0$) and then ends in $l$; we shall denote this arc by
$i_kl$.

It will be most convenient to describe oriented arcs and their intersections using
the universal cover of $\An$.
To be precise, observe that we may identify the annulus $\An$ with a (bounded)
cylinder $\Cn$ of height $1$
with $n$ marked points on the lower boundary: we regard the cylinder as a rectangle in $\R^2$
with the vertical sides identified, cf.\ Figure~\ref{fig:ann-cyl}.
As in the annulus, we label the marked points on the lower boundary of $\Cn$
from left to right and assume that arcs are oriented anti-clockwise (when viewed from the top of the cylinder), i.e. left to right in the rectangle in $\R^2$.

In a second step, we move from the cylinder $\Cn$ to its universal covering
space $\U=(\U,\pi_n)$, with
$\U=\{(x,y)\in\R^2\mid 0\le y\le 1\}$, an infinite strip in the plane.
We adopt the orientation on $\U$ inherited from its embedding in the plane.
The covering map $\pi_n:\U\rightarrow \Cn$ is induced from wrapping $\U$
around $\Cn$, i.e.\ the effect of $\pi_n$ on $(x,y)\in\U$ is to take the first coordinate modulo $n$:
\[
\pi_n :\U \to \Cn\, ,\ (x,y)\ \longmapsto\ (x \text{ mod } n,y),
\]
so $(\lambda n,y)$ is mapped to $(0,y)$ for any $\lambda\in \mathbb{Z}$.
We choose an orientation on $\Cn$ such that $\pi_n$ is orientation-preserving.
Via the identification of $\Cn$ with $\An$, this induces an orientation on
$\An$. We will also write $\pi_n$ to indicate the covering map from $\U$ to
$\An$ directly.
We take the integers $(r,0)\in\U$, $r\in\Z$, as marked points on the lower
boundary of $\U$.

We define $\sigma:\U\to \U$ to be the translation which adds $n$ to the $x$-coordinate
of a point in $\U$, with inverse $\sigma^{-1}$ subtracting $n$ from the
$x$-coordinate of a point, so that $\pi_n(\sigma^r(u))=\pi_n(u)$
for all $u\in\U$ and for all $r\in\Z$. We write $G:=\langle\sigma\rangle$ for
the group generated by $\sigma$ which thus acts naturally on $\U$.

Observe that for any $s\in\R$, the image of
$\U_s=\{(x,y)\in \U\mid s\le x<s+n\}$ under $\pi_n$ is
the cylinder $\Cn$.

It is clear that the oriented arc in the cylinder $\Cn$ corresponding to $i_kl$
starts at a marked point $i$ and then wraps around the cylinder $k$ times before
ending at a marked point $l$. We now introduce an alternative notation for
arcs in $\U$ and $\An$. We call the arcs that we would like to focus on
\textit{admissible}.

\begin{definition}
Let $i,j$ be integers with $j>i+1$. An {\em admissible arc} in $\U$
is an oriented arc in $\U$ starting at the marked point $i=(i,0)$
and ending in $j=(j,0)$; we denote it by $[i,j]$.
An oriented arc in $\An$ is said to be \textit{admissible} if it is of the form
$\pi_n([i,j])$ where $[i,j]$ is admissible in $\U$.
\end{definition}

Note that arcs in $\An$ of the form $\pi_n([i,i+1])$, which are homotopic to the part
of the boundary between adjacent marked points, are not admissible, and nor
are arcs of the form $\pi_n([i,j])$ with $i\geq j$.

\begin{definition}
Let $[i,j]$ be an admissible arc in $\U$. Then we define the {\em combinatorial length} of $[i,j]$ to be the integer $j-i$.
\end{definition}

\begin{remark}
1) If $\alpha$ is an admissible arc in $\An$ with starting point $i$, $0\le i < n$,
then it has a unique lift $\widetilde{\alpha}$ in $\U$ which starts at $i=(i,0)$. We call
this lift the {\em canonical lift of $\alpha$}.
Observe that all other liftings of $\alpha$ can be obtained by iteratedly applying
$\sigma$ or $\sigma^{-1}$ to $\widetilde{\alpha}$.
So we get $\pi_n^{-1}(\alpha)=G\widetilde{\alpha}$.

2) Observe that the winding number of the arc
$\pi_n([i,j])$ around the inner boundary of $\An$ is given by
$k=\lfloor\frac{j-i}{n}\rfloor$.
\end{remark}

\begin{figure}
\includegraphics[scale=.4]{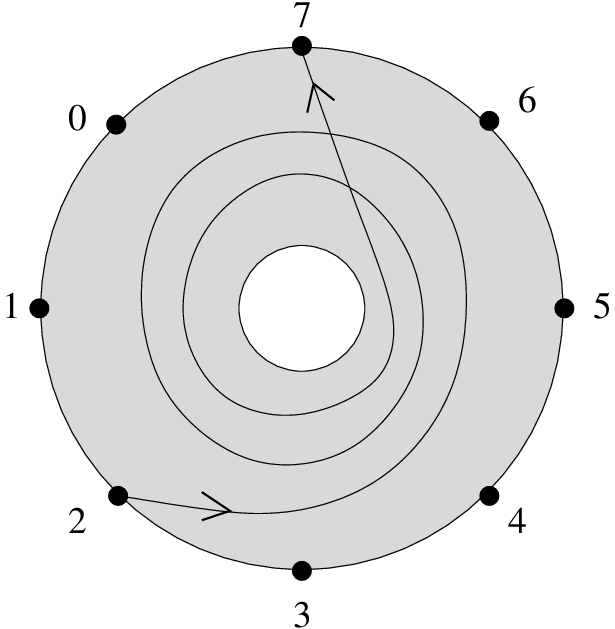}
\hspace{1.5cm}
\includegraphics[scale=.4]{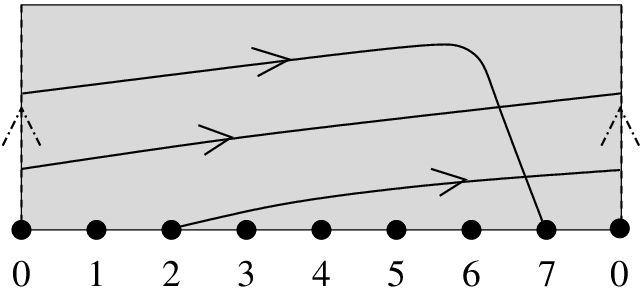}
\caption{$\alpha=\pi_8([2,23])$ in the annulus $\mathbb{A}(8)$ and in
the cylinder $\mathrm{Cyl}(8)$}\label{fig:ann-cyl}
\end{figure}

\begin{figure}
\includegraphics[scale=.4]{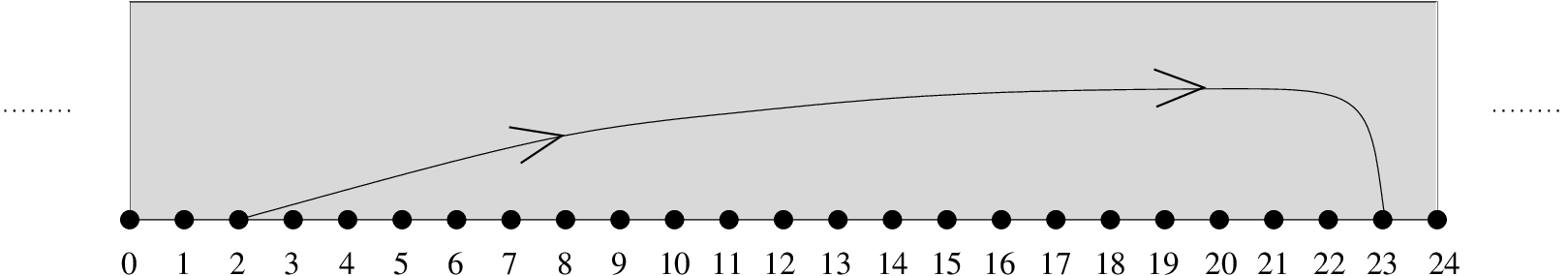}
\caption{The universal cover $\U$ with a lift of the arc $\alpha=\pi_8([2,23])$}\label{fig:univ}
\end{figure}

\subsection{A quiver of arcs} \label{ss:quiverofarcs}

We can associate a stable translation quiver to $\An$, called
$\GA=\Gamma_{\mathrm arc}(\An)$.
The vertices of $\GA$ are the admissible arcs in $\An$.
The arrows of $\GA$ are as follows.
Let $\alpha$ and $\beta$ be admissible arcs in $\An$.
Then there is an arrow from $\alpha$ to $\beta$ if and only if
there are admissible arcs $[a,b]$ and $[c,d]$ in $\U$ with
$\pi_n([a,b])=\alpha$, $\pi_n([c,d])=\beta$ and either $c=a+1$ and $d=b$,
or $c=a$ and $d=b+1$.

The translation $\tau$ on $\GA$ is induced by the map
$i\mapsto i-1$ (taken modulo $n$) for $i$ a marked point of $\An$.
This clearly defines a structure of stable translation quiver on the
set of admissible arcs in $\An$.
The case $n=5$ is shown in Figure~\ref{fig:arc-quiver} as an example.

\begin{figure}
$$\xymatrix@C=-0.1cm{
&&&&& \vdots &&&&& \\
\ar@{--}[r] & \pi_5[3,8] \ar[dr] \ar@{--}[rr] && \pi_5[4,9] \ar@{--}[rr] \ar[dr] && \pi_5[0,5] \ar@{--}[rr] \ar[dr]
&& \pi_5[1,6] \ar@{--}[rr] \ar[dr] &&
\pi_5[2,7] \ar@{--}[r] \ar[dr]&& \\
\pi_5[3,7] \ar[ur] \ar@{--}[rr] \ar[dr] && \pi_5[4,8] \ar@{--}[rr] \ar[ur] \ar[dr] && \pi_5[0,4] \ar@{--}[rr] \ar[ur] \ar[dr] && \pi_5[1,5]
\ar@{--}[rr] \ar[ur] \ar[dr] && \pi_5[2,6] \ar@{--}[rr] \ar[dr] \ar[ur] && \pi_5[3,7]
\\
\ar@{--}[r] & \pi_5[4,7] \ar@{--}[rr] \ar[ur] \ar[dr] && \pi_5[0,3] \ar@{--}[rr] \ar[ur] \ar[dr] && \pi_5[1,4]
\ar@{--}[rr] \ar[ur] \ar[dr] && \pi_5[2,5] \ar@{--}[rr] \ar[ur] \ar[dr] && \pi_5[3,6]
\ar@{--}[r] \ar[ur] \ar[dr] & \\
\pi_5[4,6] \ar@{.}[uuuu] \ar@{--}[rr] \ar[ur] && \pi_5[0,2] \ar@{--}[rr] \ar[ur] && \pi_5[1,3] \ar@{--}[rr] \ar[ur]  &&
\pi_5[2,4] \ar@{--}[rr] \ar[ur] && \pi_5[3,5] \ar@{--}[rr] \ar[ur] && \pi_5[4,6] \ar@{.}[uuuu] \\
}$$
\caption{The translation quiver $\Gamma(\A(5))$.}
\label{fig:arc-quiver}
\end{figure}
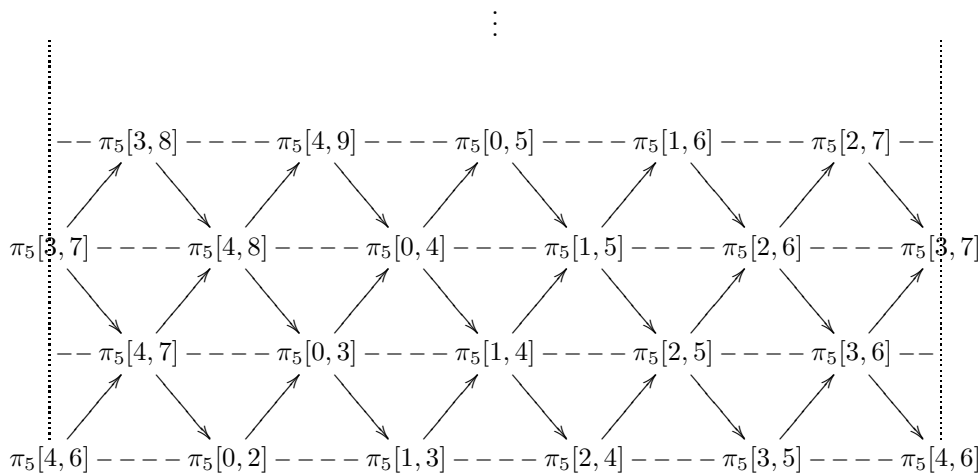
\subsection{Isomorphism of translation quivers}

Here we show that the quivers
$\GA$ and $\Gamma(\Tn)$ are isomorphic and thus
get the geometric model for tubes.
This isomorphism also appears in the Master's thesis~\cite{g} of
B. Gehrig and in~\cite[\S3.4]{bz} (in these
two cases, using unoriented arcs), and in~\cite[4.18]{w} (using oriented arcs).

To describe the AR-quiver $\Gamma(\Tn)$, recall that
$\Tn$ is uniserial. We denote the simple objects by $S_0$, $S_1$,
$\dots, S_{n-1}$.
Then, if $M$ is an indecomposable object in $\Tn$, it has
a unique composition series of the form $S_{i_1}, S_{i_2},\dots, S_{i_k}$,
for some $k\ge 1$,
where $i_1,i_2,\ldots ,i_k\in\{0,1,\dots,n-1\}$ and
$i_{j+1}=\overline{i_j+1}$ for all $j$,
where $\overline{a}$ denotes the reduction of $a$ modulo $n$.
If $a,b\in\mathbb{Z}$ and $a+1<b$, we denote the unique indecomposable object
of $\Tn$ with composition series $S_{\overline{a+1}},S_{\overline{a+2}},\ldots ,S_{\overline{b-1}}$ by $M[a,b]$. Note that every indecomposable object
arises uniquely in this way up to a shift of both entries by $n$, i.e.\ $M[a,b]\cong M[a+n,b+n]$; thus
we obtain a unique parametrization of the indecomposable objects of $\Tn$
(up to isomorphism) by restricting $a$ to be in the set $\{0,1,2,\ldots
,n-1\}$.

The AR-quiver of $\Tn$ is then easily described. It has vertices given
by the indecomposable objects $M[a,b]$ of $\Tn$, for $0\leq a \leq n-1$ and
$a+1<b$ (note that $M[a,a+2]\cong S_{\overline{a+1}}$,
for $a=0,1,\ldots ,n-1$).
For any such $a,b$, the inclusion $M[a,b]\rightarrow M[a,b+1]$
and the surjection $M[a,b]\rightarrow M[a+1,b]$ (if $a+2<b$)
correspond to arrows in the AR-quiver (using the above isomorphism if necessary); all arrows in the
AR-quiver arise in this way. The AR-translate $\tau$ takes $M[a,b]$ to
$M[a-1,b-1]$.

It is then straightforward to check that we have the following.

\begin{lemma} \label{l:ARiso}
The quivers $\GA$ and $\GT$ are isomorphic as stable translation quivers.
On vertices, the isomorphism is given by the map:
\[
\begin{array}{lccl}
\varphi: &  \{\mbox{admissible arcs in $\An$}\} & \longrightarrow & \ind(\Tn) \\
  & \pi_n([a,b]) & \longmapsto & M[a,b]
\end{array}
\]
where $a\in \{0,1,\dots,n-1\}$, $b\in\mathbb{Z}$ and $a+1<b$.
\end{lemma}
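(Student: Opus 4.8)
The plan is to verify directly that $\varphi$ is an isomorphism of stable translation quivers. Since an isomorphism of stable translation quivers is precisely a quiver isomorphism commuting with the translation (the mesh structure is then automatically preserved), this reduces to three checks: that $\varphi$ is a bijection on vertices, that it induces a bijection on arrows compatible with source and target, and that it commutes with $\tau$.

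First I would establish that $\varphi$ is a well-defined bijection on vertices. The essential input is the covering-space description: since $\U$ is contractible, an admissible arc $[a,b]$ in $\U$ is determined up to homotopy by its pair of endpoints $(a,0),(b,0)$, and by the remark following the definition of admissible arc we have $\pi_n^{-1}(\pi_n([a,b]))=G[a,b]$. Hence $\pi_n([a,b])=\pi_n([a',b'])$ if and only if $(a',b')=(a+kn,b+kn)$ for some $k\in\Z$, so restricting $a$ to $\{0,1,\dots,n-1\}$ gives a unique normal form for each admissible arc in $\An$. This matches exactly the normal form $M[a,b]$ with $0\le a\le n-1$ and $a+1<b$ for the indecomposable objects of $\Tn$ (using $M[a,b]\cong M[a+n,b+n]$), so $\varphi$ is a bijection.

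Next I would check the arrows. By definition an arrow in $\GA$ out of $\alpha=\pi_n([a,b])$ goes either to $\pi_n([a+1,b])$ or to $\pi_n([a,b+1])$. Applying $\varphi$, these become the pairs $(M[a,b],M[a+1,b])$ and $(M[a,b],M[a,b+1])$, which are exactly the surjection and inclusion arrows of $\GT$ described above. The crucial point to verify is that the admissibility conditions agree on both sides: the arc $[a+1,b]$ is admissible precisely when $a+2<b$, which is exactly the condition under which the surjection $M[a,b]\to M[a+1,b]$ is an arrow in $\GT$; and $[a,b+1]$ is always admissible, matching the fact that the inclusion $M[a,b]\to M[a,b+1]$ is always present. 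Since both quivers have no multiple arrows, this yields a bijection on arrows compatible with source and target. Commuting with $\tau$ is then immediate from the definitions, as the translation on $\GA$ sends $\pi_n([a,b])$ to $\pi_n([a-1,b-1])$ (the marked points being shifted by $i\mapsto i-1$ at both endpoints), while $\tau$ on $\GT$ sends $M[a,b]$ to $M[a-1,b-1]$, so $\varphi\tau=\tau\varphi$.

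I expect the only genuine obstacle to lie in the bijectivity of $\varphi$ on vertices, specifically in making precise that the $G$-orbit of a lift captures exactly the identification $(a,b)\sim(a+n,b+n)$ so that the two normal forms coincide. Once the normal forms are matched, the arrow and translation verifications are purely formal.
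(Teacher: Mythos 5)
Your proposal is correct and carries out exactly the verification the paper leaves implicit when it says the lemma is ``straightforward to check'': matching the normal forms $\pi_n([a,b])\leftrightarrow M[a,b]$ with $a\in\{0,\dots,n-1\}$, $a+1<b$, matching the two families of arrows (including the admissibility condition $a+2<b$ for $[a+1,b]$ against the condition for the surjection $M[a,b]\to M[a+1,b]$), and checking compatibility with $\tau$. Since both quivers have no multiple arrows, commuting with the translation indeed suffices for an isomorphism of stable translation quivers, so your argument is complete and in the same spirit as the paper's.
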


We remark that analogous results in the cluster category case have been obtained in
type A~\cite[Thm.\ 5.1]{ccs}, type D~\cite[\S5]{sc}, for a surface with marked points on the
boundary~\cite[\S1]{bz}, and in type $A_\infty$~\cite{hj} (see Section~\ref{s:Ainfinity}).
Note that here we are not considering the cluster category, but as we
observed in Section~\ref{ssec:tubes}, the AR-quivers of $\Tn$ and $\C_{\Tn}$
are isomorphic.
We remark that in~\cite{sc}, the puncture (and corresponding tagged arcs)
allows for the possibility of $3$ middle terms in the Auslander-Reiten
triangles; this does not happen here, where there are only $1$ or $2$ middle
terms in the Auslander-Reiten sequences.
For a geometric interpretation of all non-split short exact sequences
in $\Tn$, we refer to~\cite{bbm}.

We also note that by a result of Ringel~\cite[\S3]{ri} (see
also~\cite[X.2.6]{ss}), $\Tn$ is known to be standard,
i.e.\ the subcategory
$\ind(\Tn)$ is equivalent to the mesh category of $\Gamma_{\Tn}$.
Thus Lemma~\ref{l:ARiso} can be regarded as giving a geometric realisation
of the category $\Tn$ (i.e.\ as the mesh category of $\Gamma(\An)$), which is
analagous to~\cite[Thm.\ 5.2]{ccs},~\cite[Thm.\ 4.3]{sc}. As mentioned in
Section~\ref{ssec:tubes}, the AR-quiver of $\C_{\Tn}$ is isomorphic to
that of $\Tn$. It follows that a $\C_{\Tn}$ is not standard (as it is not
equivalent to $\Tn$) and we do not obtain a similar description of the
cluster tube.

\section{Geometric intersection numbers as dimensions of extension groups}
\label{s:intersectionnumbers}

In this section we show that negative geometric intersection numbers between admissible
arcs in $\An$ can be interpreted as dimensions of extension groups between the
corresponding objects in $\Tn$ via the isomorphism in Lemma~\ref{l:ARiso}.
To do this we need to compute these geometric intersection numbers; we do it by showing
that they can be computed in the universal cover $\U$ of $\An$.
We were unable to find the exact statement we needed in the literature so
include here a full proof.

We have the following (see e.g.~\cite[5.4]{c}).
\begin{theorem} (Monodromy Theorem) \label{t:monodromy}
Let $\pi:E\rightarrow X$ be a covering space of a surface $X$ and let
$\gamma,\delta$ be paths in $E$ with common starting point.
Then $\gamma\sim\delta$ if and only if
$\pi(\gamma)\sim\pi(\delta)$.
It follows that, in this situation, $\gamma$ and $\delta$
have a common finishing point.
\end{theorem}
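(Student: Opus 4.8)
The plan is to reduce the statement to two standard properties of covering spaces: the unique path-lifting property and the homotopy lifting property. The forward implication is immediate and formal: if $H$ is a homotopy in $E$ from $\gamma$ to $\delta$ fixing the starting and finishing points, then $\pi\circ H$ is a homotopy in $X$ from $\pi(\gamma)$ to $\pi(\delta)$ with the same endpoints fixed, so $\pi(\gamma)\sim\pi(\delta)$. All of the content lies in the reverse implication together with the assertion about the common finishing point, and these I would prove simultaneously by lifting a homotopy downstairs and inspecting its boundary.

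For the reverse implication, suppose $\pi(\gamma)\sim\pi(\delta)$, witnessed by a continuous map $H:[0,1]\times[0,1]\to X$ with $H(s,0)=\pi(\gamma)(s)$ and $H(s,1)=\pi(\delta)(s)$ for all $s$, and with $t\mapsto H(0,t)$ and $t\mapsto H(1,t)$ constant. First I would invoke the homotopy lifting property to produce a unique continuous lift $\widetilde{H}:[0,1]\times[0,1]\to E$ with $\pi\circ\widetilde{H}=H$ and $\widetilde{H}(s,0)=\gamma(s)$. The key verifications are then as follows. Since $t\mapsto H(0,t)$ is the constant path at the image of the common starting point, the path $t\mapsto\widetilde{H}(0,t)$ is a lift of a constant path beginning at $\gamma(0)$; by uniqueness of path lifting it is itself constant, so $\widetilde{H}(0,t)=\gamma(0)$ for all $t$. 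Consequently $s\mapsto\widetilde{H}(s,1)$ is a lift of $\pi(\delta)$ starting at $\delta(0)=\gamma(0)$, and uniqueness of path lifting forces $\widetilde{H}(s,1)=\delta(s)$. The identical argument applied to $t\mapsto H(1,t)$ shows $t\mapsto\widetilde{H}(1,t)$ is constant equal to $\gamma(1)$; evaluating at $t=1$ gives $\delta(1)=\widetilde{H}(1,1)=\gamma(1)$, which is exactly the claim that $\gamma$ and $\delta$ share a finishing point. The two constancy statements say that $\widetilde{H}$ fixes both endpoints throughout, so it is a homotopy rel endpoints from $\gamma$ to $\delta$, giving $\gamma\sim\delta$.

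The only genuine obstacle is the homotopy lifting property itself, namely the existence and uniqueness of $\widetilde{H}$; once that is in hand, everything else is a formal consequence of uniqueness of lifts and the endpoint bookkeeping above. Since $X$ is a surface and $E$ the covering space at issue (in our application the infinite strip $\U$ over the cylinder $\Cn$), this is a classical fact for which I would simply cite the reference, as the theorem statement does.
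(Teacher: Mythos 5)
Your proof is correct, but note that the paper does not actually prove this statement: it is quoted as a classical result, with the proof deferred entirely to the cited reference \cite[5.4]{c}. Your argument is the standard textbook one --- lift the homotopy downstairs using the homotopy lifting property, then apply uniqueness of path lifting to the edges of the square. The endpoint bookkeeping is exactly right: constancy of $t\mapsto\widetilde{H}(0,t)$ (as the unique lift of a constant path) identifies $s\mapsto\widetilde{H}(s,1)$ as the lift of $\pi(\delta)$ starting at $\gamma(0)=\delta(0)$, whence $\widetilde{H}(\cdot,1)=\delta$ by uniqueness of path lifts; constancy of $t\mapsto\widetilde{H}(1,t)$ then gives both that $\widetilde{H}$ is a homotopy rel endpoints and that $\delta(1)=\gamma(1)$, which is the final assertion of the theorem. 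The one ingredient you take as given, existence and uniqueness of the lift $\widetilde{H}$, is precisely the homotopy lifting property for covering spaces, and citing it is legitimate --- it is no deeper than the result being proved and is what the paper's reference itself builds on. Indeed, nothing in your argument uses that $X$ is a surface, so your proof establishes the statement for an arbitrary covering space, which is slightly more general than what is claimed.
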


Two arcs $\alpha$ and $\beta$ in a surface $X$ are said to be in
\emph{general position} provided they intersect each other only transversely
and have no points of intersection of multiplicity greater than two
(other than possibly their end-points).
We denote this by writing $\alpha\pitchfork\beta$.
The \emph{geometric intersection number} of two arcs
$\alpha$ and $\beta$, denoted $I_X(\alpha,\beta)$ is defined by:
$$I_X(\alpha,\beta)=\min_{\substack{\alpha'\sim\alpha,\beta'\sim\beta \\
\alpha'\pitchfork\beta'}} |\alpha'\cap \beta'|,$$
where $\alpha'\cap\beta'$ denotes the intersection of $\alpha'$ and $\beta'$,
excluding their end-points. This is finite (see e.g.~\cite[\S3]{gp}).
We call elements of $\alpha'\cap\beta'$ \emph{crossings}.
We will usually just refer to $I_X(\alpha,\beta)$ as the \emph{intersection
number} of $\alpha,\beta$.
If $I_X(\alpha,\beta)=0$, we say that $\alpha$ and $\beta$ are \emph{noncrossing}.

If $X$ is oriented, a crossing between two oriented arcs $\alpha,\beta$
in $X$ in general position is called \emph{positive} if the orientation
arising from the pair of tangents to the
arcs at the crossing point is compatible with the orientation of $X$; it is
\emph{negative} otherwise. Figure~\ref{f:crossingtypes} illustrates the
two kinds of crossing for arcs in the plane (or $\U$). We remark that
the sign of a crossing in $\An$ is the same for either the orientation we
gave $\An$ in Section \ref{ssec:universalcover} (i.e.\ compatible with that
on $\Cn$) or the orientation inherited from its embedding into the plane.

\begin{figure}
\psfragscanon
\psfrag{X}{$\alpha$}
\psfrag{Y}{$\beta$}
\subfigure[Positive crossing]{\includegraphics[width=3cm]{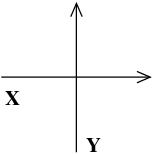}}
\hskip 1cm
\subfigure[Negative crossing]{\includegraphics[width=3cm]{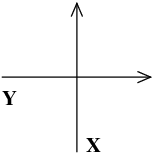}}
\caption{The two types of crossing between arcs $\alpha$ and $\beta$.}
\label{f:crossingtypes}
\end{figure}

We then define (for any pair of arcs in $X$)
$$I_X^+(\alpha,\beta)=\min_{\substack{\alpha'\sim\alpha,\beta'\sim\beta \\
\alpha'\pitchfork\beta'}} |\alpha'\cap^+ \beta'|,$$
where $\alpha'\cap^+\beta'$ denotes the set of positive crossings between
$\alpha'$ and $\beta'$.
The number $I_X^-(\alpha,\beta)$ is defined similarly.

\begin{remark} \label{rem:selfintersection}
If $\alpha=\beta$, then $I_X(\alpha,\alpha)$ is the minimum number
of crossings between a pair of arcs $\alpha',\beta'$ in general position
satisfying $\alpha'\sim\alpha$ and $\beta'\sim\alpha$.
For example, consider the arc $\alpha=\pi_8([0,9])$ in $\A(8)$.
Figure~\ref{fig:selfintersection} depicts two arcs $\alpha',\beta'$
homotopic to $\pi_8([0,9])$ in general position, with minimum number
of intersections. The intersections are indicated by small circles.
We have $I_{\A(8)}(\pi_8([0,9]),\pi_8([0,9]))=2$.
\end{remark}

\begin{figure}
\psfragscanon
\psfrag{0}{$0$}
\psfrag{1}{$1$}
\psfrag{2}{$2$}
\psfrag{3}{$3$}
\psfrag{4}{$4$}
\psfrag{5}{$5$}
\psfrag{6}{$6$}
\psfrag{7}{$7$}
\includegraphics[scale=.4]{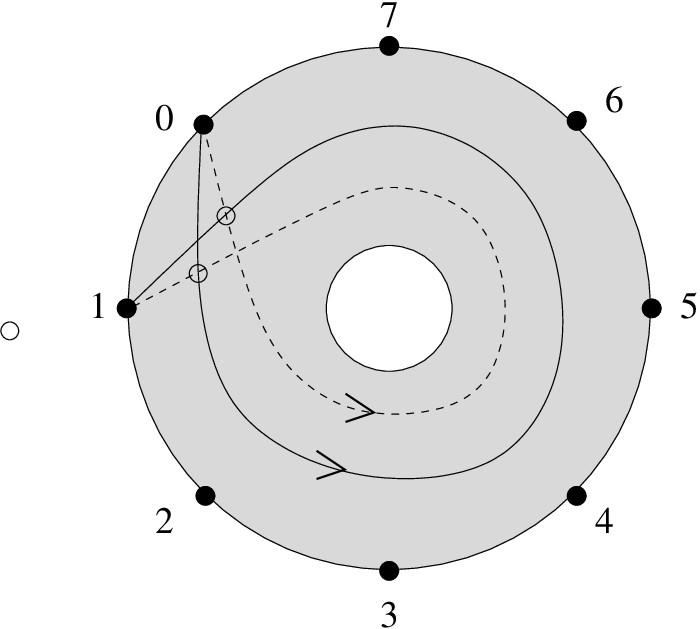}
\caption{$I_{\A(8)}(\pi_8([0,9]),\pi_8([0,9]))=2$.}
\label{fig:selfintersection}
\end{figure}

Recall that a \emph{lift} of an arc $\alpha$ in $\An$ (to $\U$)
is an arc $\widetilde{\alpha}$ in $\U$ such that $\pi_n(\widetilde{\alpha})=\alpha$.

\begin{lemma} \label{l:lift}
Let $\widetilde{\alpha},\widetilde{\beta}$ be lifts of oriented arcs
$\alpha,\beta$ in $\An$ respectively. Then
$$I_{\An}(\alpha,\beta)=\min_{\substack{\gamma\sim\widetilde{\alpha},\delta\sim\widetilde{\beta} \\
G\gamma\pitchfork G\delta}} \left|\gamma\cap G\delta \right|.$$
Furthermore, $I_{\An}^+(\alpha,\beta)$ (respectively, $I_{\An}^-(\alpha,\beta)$) can be computed
in the same way except that on the right hand side, only positive (respectively,
negative) crossings are counted.
\end{lemma}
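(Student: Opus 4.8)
The plan is to reduce the computation of intersection numbers in the annulus $\An$ to a computation upstairs in the universal cover $\U$, using the covering map $\pi_n$ together with the Monodromy Theorem (Theorem~\ref{t:monodromy}). The essential point is that a crossing between $\alpha$ and $\beta$ in $\An$ lifts, under $\pi_n^{-1}$, to a crossing between a lift of $\alpha$ and \emph{some} $G$-translate of a lift of $\beta$, and conversely that crossings downstairs and crossings between $\gamma$ and $G\delta$ upstairs are in bijection. Since $\pi_n$ is a local homeomorphism, transversality is preserved in both directions, so I may freely pass representatives of the homotopy classes back and forth.

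First I would fix lifts $\widetilde{\alpha},\widetilde{\beta}$ as in the statement and observe that $\pi_n^{-1}(\alpha)=G\widetilde{\alpha}$ and $\pi_n^{-1}(\beta)=G\widetilde{\beta}$. Because $\pi_n$ is a covering map, a point of $\alpha\cap\beta$ in $\An$ has exactly one preimage on $\widetilde{\alpha}$ (equivalently, on any chosen single lift of $\alpha$), and that preimage lies on exactly one translate $\sigma^r\widetilde{\beta}$; thus points of $\alpha\cap\beta$ correspond bijectively to points of $\widetilde{\alpha}\cap G\widetilde{\beta}$. I would record that this correspondence is local-homeomorphic, so that transversality of $\alpha,\beta$ at a point is equivalent to transversality of $\widetilde{\alpha}$ and the relevant $\sigma^r\widetilde{\beta}$ at the corresponding point; hence $\alpha\pitchfork\beta$ if and only if $G\widetilde{\alpha}\pitchfork G\widetilde{\beta}$. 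This gives $|\alpha\cap\beta|=|\widetilde{\alpha}\cap G\widetilde{\beta}|$ for any fixed pair of transverse representatives, and in particular the right-hand side is $G$-invariant, so counting $|\gamma\cap G\delta|$ is well-defined.

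Next I would match up the two minimizations. Given transverse representatives $\alpha'\sim\alpha$, $\beta'\sim\beta$ realizing the minimum downstairs, their lifts $\gamma:=\widetilde{\alpha'}\sim\widetilde{\alpha}$ and $\delta:=\widetilde{\beta'}\sim\widetilde{\beta}$ (homotopic by the Monodromy Theorem, since $\alpha'\sim\alpha$ and $\beta'\sim\beta$) satisfy $G\gamma\pitchfork G\delta$ and $|\gamma\cap G\delta|=|\alpha'\cap\beta'|$, so the right-hand minimum is at most $I_{\An}(\alpha,\beta)$. Conversely, given $\gamma\sim\widetilde{\alpha}$ and $\delta\sim\widetilde{\beta}$ with $G\gamma\pitchfork G\delta$, I would push them down to $\alpha'=\pi_n(\gamma)$ and $\beta'=\pi_n(\delta)$; the Monodromy Theorem gives $\alpha'\sim\alpha$ and $\beta'\sim\beta$, the images are transverse, and the crossing bijection yields $|\alpha'\cap\beta'|=|\gamma\cap G\delta|$, so $I_{\An}(\alpha,\beta)$ is at most the right-hand minimum. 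Combining the two inequalities gives equality. The sign refinement is then immediate: since $\pi_n$ is orientation-preserving (as arranged in Section~\ref{ssec:universalcover}), the sign of each crossing is preserved under the bijection, so restricting both sides to count only positive (respectively, negative) crossings gives $I_{\An}^+$ and $I_{\An}^-$ by exactly the same argument.

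The main obstacle I anticipate is the careful bookkeeping of the $G$-action in the bijection of crossing points: one must verify that counting $|\gamma\cap G\delta|$ (a single lift of $\alpha$ against the full orbit of a lift of $\beta$) genuinely equals $|\alpha'\cap\beta'|$ and does not over- or under-count, and that the general-position condition $G\gamma\pitchfork G\delta$ upstairs is the honest lift of $\alpha'\pitchfork\beta'$ rather than a strictly stronger transversality requirement among the translates themselves. Making the local-homeomorphism argument precise enough to transfer both transversality and crossing sign, while keeping the $G$-equivariance explicit, is where the real care is needed; everything else follows formally from the Monodromy Theorem.
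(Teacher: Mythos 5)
Your proposal is correct and follows essentially the same route as the paper's proof: lift to the universal cover, use the Monodromy Theorem to match homotopy classes, show that general position downstairs corresponds to $G\gamma\pitchfork G\delta$ upstairs, establish a bijection between $\alpha'\cap\beta'$ and $\gamma\cap G\delta$, and invoke orientation-preservation of $\pi_n$ for the signed counts. The one point to tighten is your claim that a point of $\alpha\cap\beta$ has exactly one preimage on $\widetilde{\alpha}$ ``because $\pi_n$ is a covering map'': covering maps are only locally injective, so this uniqueness in fact requires the no-multiple-points clause of general position (two preimages on the same lift would project to a multiple point of $\alpha'\cap\beta'$), which is precisely how the paper proves injectivity of the counting map $f$.
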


\begin{proof} We note that the right hand side of the above is always finite
as $\gamma,\delta$ are compact.
We first prove the following claim: \\
\textbf{Claim}:
Let $\gamma$ and $\delta$ be lifts in $\U$ of $\alpha'$ and $\beta'$, respectively.
Then the following are equivalent.
\begin{enumerate}
\item[(a)] $\gamma\sim\widetilde{\alpha}$, $\delta\sim\widetilde{\beta}$, and
$G\gamma \pitchfork G\delta$.
\item[(b)] $\alpha'\sim\alpha$, $\beta'\sim\beta$, and $\alpha'\pitchfork\beta'$.
\end{enumerate}
\textit{Proof of claim:} (a) implies (b). Given $\gamma,\delta$ as in (a), the fact that
$\alpha'\sim\alpha$ and $\beta'\sim\beta$ follows from Theorem~\ref{t:monodromy}.
If $\alpha'$ and $\beta'$ intersected non-transversely, two lifts $\sigma^a(\gamma)$ and
$\sigma^b(\delta)$ of $\alpha'$ and $\beta'$ would also intersect non-transversely, a contradiction.
And if $\alpha'$ and $\beta'$ intersected in a point of multiplicity greater than $2$ then,
taking the pre-image of an admissible neighbourhood of that point, we'd have that
$G\gamma$ and $G\delta$ intersected in a point of multiplicity greater than $2$,
a contradiction. Hence (b) holds.

Conversely, suppose that $\alpha'$ and $\beta'$ are as in (b). Then
$\gamma\sim\widetilde{\alpha}$
and $\delta\sim\widetilde{\beta}$ follow from Theorem~\ref{t:monodromy}. If
$G\gamma$ and $G\delta$ intersected non-transversely,
the image of a non-transverse intersection would be a non-transverse intersection of
$\alpha'$ and $\beta'$, a contradiction. Similarly, if
$G\gamma$ and $G\delta$ intersected in a point of multiplicity greater than $2$,
the image would be a point of multiplicity greater than $2$ in the intersection of
$\alpha'$ and $\beta'$, a contradiction. Hence (a) holds.

Now in order to prove the lemma we must show that, in the circumstances of the claim,
we have
$$|\gamma\cap G\delta| = |\alpha'\cap \beta'|.$$
Let $f$ be $\pi_n$ restricted to $\gamma\cap G\delta$.
It is clear that the image of $f$ is contained in $\alpha'\cap \beta'$.
If $x\in\alpha'\cap \beta'$ there is $u\in \U$ such that $\pi_n(u)=x$.
Then we have:
$$u\in G\gamma \cap G\delta$$
since $\pi_n\gamma=\alpha'$ and $\pi_n\delta=\beta'$.
So $u\in \sigma^a(\gamma)\cap \sigma^b(\delta)$ for some $a,b\in\mathbb{Z}$.
Hence $\sigma^{-a}(u)\in \gamma\cap G\delta$ and
$\pi_n(\sigma^{-a}(u))=\pi_n(u)=x$ so $f$ is surjective.
If $u\not=u'\in \gamma \cap G\delta$ and
$f(u)=f(u')$ then $u,u'\in\gamma$, $u\in \sigma^a(\delta)$
and $u'\in \sigma^b(\delta)$ for some $a,b\in\mathbb{Z}$.
But then $\pi_n(u)$ is a point in the intersection of $\alpha'$
and $\beta'$ of multiplicity greater than $2$, a contradiction.
Hence $f$ is injective and thus a bijection.
We have the seen that the first part of the lemma holds. The other cases can be
dealt with in a similar way, since $\pi_n$ preserves orientation.
\end{proof}

To compute the intersection number of two admissible arcs
$\alpha$ and $\beta$ in $\An$, we will later exhibit explicit arcs in $\U$ homotopic to lifts
of $\alpha$ and $\beta$ giving an upper bound for $I_{\An}(\alpha,\beta)$
using Lemma~\ref{l:lift}. The following result gives a lower bound which
we will use to get equality. This lower bound is easily computable as it just involves
intersection numbers in $\U$.

\begin{corollary} \label{c:liftbound}
Let $\widetilde{\alpha},\widetilde{\beta}$ be lifts of oriented arcs
$\alpha,\beta$ in $\An$ respectively.
Then $$I_{\An}(\alpha,\beta)\geq \sum_{m\in\mathbb{Z}} I_{\U}(\widetilde{\alpha},\sigma^m(\widetilde{\beta})).$$
Similar results hold for $I_{\An}^+(\alpha,\beta)$ and $I_{\An}^-(\alpha,\beta)$.
\end{corollary}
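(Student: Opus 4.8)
The plan is to deduce Corollary~\ref{c:liftbound} directly from Lemma~\ref{l:lift} by a counting argument, organising the crossings of the lifted arcs according to which translate of $\widetilde{\beta}$ they lie on. Fix lifts $\widetilde{\alpha},\widetilde{\beta}$ of $\alpha,\beta$. By Lemma~\ref{l:lift} there exist $\gamma\sim\widetilde{\alpha}$ and $\delta\sim\widetilde{\beta}$ with $G\gamma\pitchfork G\delta$ such that
\[
I_{\An}(\alpha,\beta)=\left|\gamma\cap G\delta\right|.
\]
Since $G=\langle\sigma\rangle$ and $\pi_n^{-1}(\beta)=G\delta$, every point of $\gamma\cap G\delta$ lies on exactly one translate $\sigma^m(\delta)$, so we may partition the intersection and write $\left|\gamma\cap G\delta\right|=\sum_{m\in\Z}\left|\gamma\cap\sigma^m(\delta)\right|$. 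The sum is finite because $\gamma$ is compact and the translates $\sigma^m(\delta)$ eventually leave any bounded region. My first step is to record this partition carefully.

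Next I would bound each term from below. For each fixed $m$, the pair $\gamma$ and $\sigma^m(\delta)$ is a pair of arcs in $\U$ homotopic respectively to $\widetilde{\alpha}$ and to $\sigma^m(\widetilde{\beta})$ (the latter because $\delta\sim\widetilde{\beta}$ implies $\sigma^m(\delta)\sim\sigma^m(\widetilde{\beta})$, as $\sigma$ is a homeomorphism). Moreover $G\gamma\pitchfork G\delta$ forces $\gamma\pitchfork\sigma^m(\delta)$, so this particular pair is a valid competitor in the minimisation defining $I_{\U}(\widetilde{\alpha},\sigma^m(\widetilde{\beta}))$. By definition of the geometric intersection number in $\U$ we therefore get
\[
\left|\gamma\cap\sigma^m(\delta)\right|\ \geq\ I_{\U}\!\left(\widetilde{\alpha},\sigma^m(\widetilde{\beta})\right)
\]
for every $m$. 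Summing over $m\in\Z$ and combining with the partition from the first step yields the claimed inequality.

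For the signed versions, the same argument applies verbatim once I invoke the final sentence of Lemma~\ref{l:lift}: the covering map $\pi_n$ is orientation-preserving, hence so is each $\sigma^m$, so the sign of each crossing is preserved under the bijection and under the $G$-action. Thus restricting attention on both sides to positive crossings (respectively negative crossings) gives the bounds for $I_{\An}^+$ and $I_{\An}^-$ with no additional work.

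I do not expect a serious obstacle here, since this is a direct corollary; the only point that needs a little care is the interchange between the single realising pair $(\gamma,\delta)$ on the left and the \emph{separate} minimisations $I_{\U}(\widetilde{\alpha},\sigma^m(\widetilde{\beta}))$ on the right. The inequality goes in the easy direction precisely because the translates $\sigma^m(\delta)$ that realise $I_{\An}(\alpha,\beta)$ collectively need not simultaneously minimise each individual term; one fixed family of arcs can only give an upper bound for the left side and hence a lower bound term-by-term for the right. Making sure the quantifiers are arranged so that we are comparing a fixed competitor against a minimum (rather than a minimum against a minimum) is the one spot where a misstep would reverse the inequality, so I would state that comparison explicitly.
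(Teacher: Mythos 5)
Your proposal is correct and follows essentially the same route as the paper: both deduce the inequality from Lemma~\ref{l:lift} by decomposing the intersection $\gamma\cap G\delta$ into the pieces $\gamma\cap\sigma^m(\delta)$ (disjoint by the no-multiple-points condition in general position) and then bounding each piece below by $I_{\U}(\widetilde{\alpha},\sigma^m(\widetilde{\beta}))$, with the signed cases handled by orientation-preservation of $\pi_n$ and $\sigma$. The only cosmetic difference is that you fix a pair realising the minimum and argue pointwise, whereas the paper writes the same comparison as a chain of inequalities between minima; the quantifier issue you flag at the end is exactly what that chain encodes.
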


\begin{proof}
Note that the sum on the right hand side is finite since $\widetilde{\alpha}$
and $\widetilde{\beta}$ are compact, so only finitely many of the intersection
numbers are non-zero. Using Lemma~\ref{l:lift}, we have:
\begin{eqnarray*}
I_{\An}(\alpha,\beta)
&=&\min_{\substack{\gamma\sim\widetilde{\alpha},\delta\sim\widetilde{\beta} \\
G\gamma\pitchfork G\delta}} \left|\gamma\cap G\delta \right| \\
&=&\min_{\substack{\gamma\sim\widetilde{\alpha},\delta\sim\widetilde{\beta} \\
G\gamma\pitchfork G\delta}} \sum_{m\in\mathbb{Z}} \left|\gamma\cap \sigma^m\delta \right| \\
&\geq&
\sum_{m\in\mathbb{Z}}
\min_{\substack{\gamma\sim\widetilde{\alpha},\delta\sim\widetilde{\beta} \\
G\gamma\pitchfork G\delta}} \left|\gamma\cap \sigma^m(\delta)\right| \\
&\geq&
\sum_{m\in\mathbb{Z}}
\min_{\substack{\gamma\sim\widetilde{\alpha},\delta\sim\sigma^m(\widetilde{\beta})\\
\gamma\pitchfork \delta}} \left|\gamma\cap \delta\right| \\
&=& \sum_{m\in\mathbb{Z}} I_{\U}(\widetilde{\alpha},\sigma^m(\beta)),
\end{eqnarray*}
as required.
\end{proof}

We can now put the above results together to obtain a computation of
intersection numbers of admissible arcs in the annulus.

\begin{prop} \label{p:intersectionnumbers}
Let $[a,b]$ and $[c,d]$ be admissible arcs in $\U$ and suppose that
$d-c\geq b-a$, i.e.\  the combinatorial length of $[c,d]$ is greater than or
equal to the combinatorial length of $[a,b]$. Then:
\begin{enumerate}
\item[(a)]
$$I_{\An}^+(\pi_n([a,b]),\pi_n([c,d]))=|\{m\in \mathbb{Z}\,:\,a<\sigma^m(c)<b\}|;$$
\item[(b)]
$$I_{\An}^-(\pi_n([a,b]),\pi_n([c,d]))=|\{m\in \mathbb{Z}\,:\,a<\sigma^m(d)<b\}|;$$
\item[(c)]
$$I_{\An}(\pi_n([a,b]),\pi_n([c,d]))=I_{\An}^+(\pi_n([a,b]),\pi_n([c,d]))+I_{\An}^-(\pi_n([a,b]),\pi_n([c,d])).$$
\end{enumerate}
\end{prop}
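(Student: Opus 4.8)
The plan is to prove each equality by sandwiching the annular intersection number between a lower bound coming from Corollary~\ref{c:liftbound} and a matching upper bound obtained from Lemma~\ref{l:lift} by exhibiting explicit representatives. Both bounds reduce everything to the single-pair intersection numbers $I_{\U}([a,b],\sigma^m([c,d]))$ in the universal cover, where $\sigma^m([c,d])=[c+mn,d+mn]$. Writing $c'=c+mn$ and $d'=d+mn$, the key simplification is that $\U=\{0\le y\le 1\}$ is convex, hence simply connected, so every arc with endpoints $(a,0)$ and $(b,0)$ is homotopic rel endpoints to a fixed upward-bulging arc; I will work with such concrete representatives throughout.

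First I would analyse a single pair. Two upward-bulging arcs $[a,b]$ and $[c',d']$ (oriented left to right, with $c'<d'$) can be drawn to meet in at most one point, and they must meet exactly once precisely when they link, i.e.\ when exactly one of $c',d'$ lies strictly inside the open interval $(a,b)$; in every other case (nested, disjoint, or both endpoints inside) they can be made disjoint. Thus $I_{\U}([a,b],[c',d'])\in\{0,1\}$. To pin down the sign I would compute the determinant of the ordered pair of tangent vectors at the crossing in the two linking configurations: when $a<c'<b<d'$ (so the starting point $c'$ is the inner endpoint) the crossing is positive, whereas when $c'<a<d'<b$ (so the ending point $d'$ is the inner endpoint) it is negative. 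This is a short explicit tangent computation and, combined with the uniqueness of homotopy classes in $\U$, it yields $I_{\U}^+=1$ exactly in the first configuration and $I_{\U}^-=1$ exactly in the second.

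The hypothesis $d-c\ge b-a$ is precisely what makes the two configurations mutually exclusive and matches the stated counts. Since $\sigma$ preserves combinatorial length, $d'-c'=d-c\ge b-a$; hence if $c'\in(a,b)$ then $d'=c'+(d-c)>b$, forcing the positive configuration, while if $d'\in(a,b)$ then $c'=d'-(d-c)<a$, forcing the negative one, and $c',d'$ can never both lie in $(a,b)$. Consequently $\sum_{m} I_{\U}^{+}([a,b],\sigma^{m}([c,d]))=|\{m:a<\sigma^m(c)<b\}|$ and $\sum_{m} I_{\U}^{-}([a,b],\sigma^{m}([c,d]))=|\{m:a<\sigma^m(d)<b\}|$. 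Corollary~\ref{c:liftbound} then gives the ``$\ge$'' direction of (a) and (b).

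For the matching upper bound I would choose representatives $\gamma\sim\widetilde{\alpha}$ and $\delta\sim\widetilde{\beta}$ so that $\gamma$ meets each translate $\sigma^m(\delta)$ in the minimal $I_{\U}([a,b],\sigma^m([c,d]))$ points, with the sign found above, and so that the families $G\gamma\pitchfork G\delta$ are in general position (perturbing slightly if necessary). Then $|\gamma\cap G\delta|$ realizes the claimed totals for positive and for negative crossings simultaneously, and Lemma~\ref{l:lift} delivers the ``$\le$'' direction of (a) and (b), hence equality. Finally (c) follows because every crossing is either positive or negative: these same representatives give $I_{\An}\le I_{\An}^{+}+I_{\An}^{-}$, while $I_{\An}\ge I_{\An}^{+}+I_{\An}^{-}$ is automatic (and also drops out of Corollary~\ref{c:liftbound}). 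The main obstacle I anticipate is the bookkeeping in the third step: correctly attributing each linking configuration to a positive or negative crossing via the tangent-vector computation, and using the length hypothesis to guarantee that exactly one endpoint lands inside $(a,b)$, so that there is no double counting between (a) and (b).
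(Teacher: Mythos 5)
Your proposal is correct and follows essentially the same strategy as the paper: sandwich each intersection number between the lower bound from Corollary~\ref{c:liftbound} and an upper bound from Lemma~\ref{l:lift} obtained by exhibiting explicit representatives in $\U$, using the hypothesis $d-c\geq b-a$ to ensure that for each translate exactly one of the two linking configurations (hence exactly one sign) can occur. The only difference is in execution: you analyse each pair $([a,b],\sigma^m([c,d]))$ separately via endpoint linking and use symmetric upward-bulging (e.g.\ semicircular) representatives that realise all pairwise minima simultaneously, whereas the paper builds a single piecewise-linear pair $(\gamma,\delta)$ with the peak of $\gamma$ placed above the tops of the translates of $\delta$ so that the crossing counts and signs can be read off segment by segment.
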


\begin{proof}
We note that $[c,d]$ is homotopy equivalent to the oriented arc $\delta$ consisting
of two line segments, the first, $L_1$, joining $(c,0)$ to $(\frac{1}{2}(c+d),h)$
and the second, $L_2$, joining $(\frac{1}{2}(c+d),h)$ to $(d,0)$, where
$0<h<1$ is fixed.
Note that in order to ensure the arc is smooth, we round it off at the point
$(\frac{1}{2}(c+d),h)$; it is clear that we can do this within a ball of radius
$\varepsilon>0$ for arbitrarily small $\varepsilon$ so it will not affect
the rest of the proof.

By translating $[a,b]$ if necessary, we may assume that
$c\leq a<c+n$. Let $M'_1$ be the line segment joining $(a,0)$ to
the point $(\frac{1}{2}(c+d+n),h)$, i.e.\ to the mid-point between the top of
$\delta$ and the top of $\sigma(\delta)$.

Let $\delta'$ be the $\sigma$-translate of $\delta$ whose end-point is $(y,0)$
with $y<b$ maximal. Let $M'_2$ be the line segment joining $(b,0)$ with
$(y+(c-d+n)/2,h)$, i.e.\ with the mid-point between the top of $\delta'$ and the
top of $\sigma(\delta')$.
Let $v$ be the point where $M'_1$ and $M'_2$ meet; then let
$M_1$ be the line joining $(a,0)$ to $v$ and let $M_2$ be the line joining
$v$ to $(b,0)$. Then $\gamma$, the oriented arc which starts at $(a,0)$, travels along
$M_1$ to $v$ and then along $M_2$ to $(b,0)$ is homotopy equivalent to $[a,b]$
(rounded off at $v$ as before).
See Figure~\ref{f:liftexample} for an example.

\begin{figure}
\psfragscanon
\psfrag{v}{$v$}
\psfrag{cc}{$\gamma$}
\psfrag{dd}{$\delta$}
\psfrag{c}{$c$}
\psfrag{a}{$a$}
\psfrag{b}{$b$}
\psfrag{d}{$d$}
\includegraphics[width=12cm]{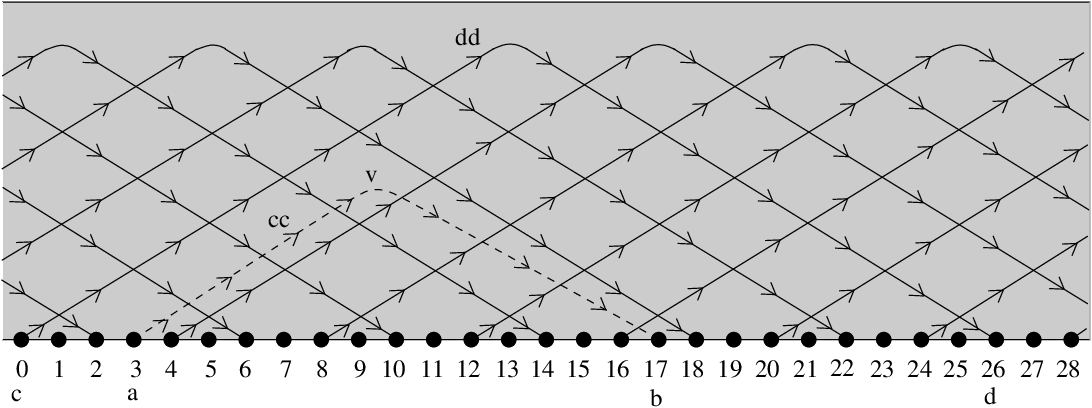}
\caption{Computing the intersection numbers of $\pi_4([0,26])$ and $\pi_4([3,17])$.
We have $I_{\A(4)}^+(\pi_4([0,26]),\pi_4([3,17]))=3$, $I_{\A(4)}^-(\pi_4([0,26]),\pi_4([3,17]))=4$ and $I_{\A(4)}(\pi_4([0,26]),\pi_4([3,17]))=7$.}
\label{f:liftexample}
\end{figure}

It is clear that
$\gamma$ intersects $G\delta$ transversely, and hence so does $G\gamma$.
Note that $M_1$ crosses only $\sigma$-translates of $L_2$ and
$\sigma$-translates of $M_2$.
Similarly, $M_2$ crosses only $\sigma$-translates of $L_1$ and $\sigma$-translates of $M_1$.
The same is true of $\sigma$-translates of $M_1$ and $M_2$.
It follows that there are no points of multiplicity greater than $2$
in the intersection of $G\gamma$ and $G\delta$.

Let $\alpha=\pi_n([a,b])$ and $\beta=\pi_n([c,d])$.
By Lemma~\ref{l:lift}, we have that $I_{\An}^+(\alpha,\beta)$ is
less than or equal to the number of positive crossings between $\gamma$ and
$G\delta$. Using the explicit description of $\gamma$ and $\delta$ above,
the latter number equals $\sum_{m\in\mathbb{Z}} I_{\U}^+(\gamma,\sigma^m\delta)$,
which equals $|\{m\in \mathbb{Z}\,:\,a<\sigma^m(c)<b\}|$.
By Corollary~\ref{c:liftbound}, we have
$$I_{\An}^+(\alpha,\beta)\geq \sum_{m\in\mathbb{Z}}I_{\U}^+([a,b],\sigma^m[c,d]).$$
Since $\gamma\sim [a,b]$ and $\delta\sim [c,d]$, it follows that
$$I_{\An}^+(\alpha,\beta)\geq \sum_{m\in\mathbb{Z}}I_{\U}^+(\gamma,\sigma^m\delta).$$
Putting this together with the above, we have equality and that part (a) of
the proposition holds.

Parts (b) and (c) of the proposition are proved in a similar way, using the
same representatives $\gamma$ and $\delta$ as above.
\end{proof}

For an example of Proposition~\ref{p:intersectionnumbers} in the case $n=4$, see
Figure~\ref{f:liftexample}.

We note that it follows from the above proof that the inequality in
Corollary~\ref{c:liftbound} is an equality, i.e.\ that intersection numbers
between admissible arcs in $\An$ can be computed in $\U$.

\begin{corollary} \label{c:intersectionnumberlift}
Let $\widetilde{\alpha},\widetilde{\beta}$ be lifts of admissible arcs
$\alpha,\beta$ in $\An$ respectively. Then
$$I_{\An}(\alpha,\beta)=\sum_{m\in \mathbb{Z}} I_{\U}(\widetilde{\alpha},\sigma^m(\widetilde{\beta})).$$
Similar results hold for $I_{\An}^+(\alpha,\beta)$ and $I_{\An}^-(\alpha,\beta)$.
\end{corollary}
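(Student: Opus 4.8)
The plan is to deduce this corollary almost directly from the proof of Proposition~\ref{p:intersectionnumbers}, which already established equality in the bound of Corollary~\ref{c:liftbound} for one specific choice of lifts. Two points must be addressed to upgrade that observation into the stated generality: the right-hand side must be shown to be independent of the choice of lifts $\widetilde{\alpha},\widetilde{\beta}$, and the combinatorial-length hypothesis $d-c\ge b-a$ appearing in the proposition must be removed.

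First I would show that the sum $\sum_{m\in\Z} I_{\U}(\widetilde{\alpha},\sigma^m(\widetilde{\beta}))$ does not depend on the chosen lifts. Since $\sigma$ is a deck transformation of $\U$, it is an orientation-preserving homeomorphism, so $I_{\U}(\sigma^p\gamma,\sigma^p\delta)=I_{\U}(\gamma,\delta)$ for all $p$. Any other pair of lifts of $\alpha$ and $\beta$ has the form $\sigma^p(\widetilde{\alpha})$ and $\sigma^q(\widetilde{\beta})$; substituting these and re-indexing the sum by $m\mapsto m-p+q$ leaves the total unchanged, and the identical argument works for the signed counts. Hence it suffices to verify the equality for the canonical lifts, so I may take $\widetilde{\alpha}=[a,b]$ and $\widetilde{\beta}=[c,d]$ with $\alpha=\pi_n([a,b])$, $\beta=\pi_n([c,d])$.

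Next I would invoke the proof of Proposition~\ref{p:intersectionnumbers} directly. For the explicit representatives $\gamma\sim[a,b]$ and $\delta\sim[c,d]$ constructed there, Lemma~\ref{l:lift} gives $I_{\An}^{\pm}(\alpha,\beta)\le\sum_{m}I_{\U}^{\pm}(\gamma,\sigma^m\delta)$, while Corollary~\ref{c:liftbound} gives the reverse inequality $I_{\An}^{\pm}(\alpha,\beta)\ge\sum_m I_{\U}^{\pm}([a,b],\sigma^m[c,d])$; since $I_{\U}$ depends only on homotopy classes and $\gamma\sim[a,b]$, $\delta\sim[c,d]$, these two sums agree, forcing equality throughout. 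This yields the corollary precisely when $d-c\ge b-a$, and the same chain of inequalities applies verbatim to the total intersection number.

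Finally I would remove the length hypothesis by symmetry. The total intersection number satisfies $I_{\An}(\alpha,\beta)=I_{\An}(\beta,\alpha)$ and, after re-indexing, $\sum_m I_{\U}(\widetilde{\alpha},\sigma^m\widetilde{\beta})=\sum_m I_{\U}(\widetilde{\beta},\sigma^m\widetilde{\alpha})$, so the unsigned case for arbitrary combinatorial lengths follows from the case already treated. For the signed versions, reversing the ordered pair of arcs interchanges positive and negative crossings, giving $I_{\An}^{+}(\alpha,\beta)=I_{\An}^{-}(\beta,\alpha)$ together with $\sum_m I_{\U}^{+}(\widetilde{\alpha},\sigma^m\widetilde{\beta})=\sum_m I_{\U}^{-}(\widetilde{\beta},\sigma^m\widetilde{\alpha})$; thus the case $d-c<b-a$ reduces to the case $d-c>b-a$ with the roles of $\alpha$ and $\beta$ exchanged. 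The only genuinely delicate point is tracking the sign interchange and the re-indexing simultaneously, but because $\sigma$ preserves orientation the group action affects no signs, and the matching is routine.
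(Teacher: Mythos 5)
Your proof is correct and follows essentially the same route as the paper, which states this corollary as an immediate consequence of the proof of Proposition~\ref{p:intersectionnumbers} (where the upper bound from Lemma~\ref{l:lift} applied to the explicit representatives $\gamma,\delta$ meets the lower bound from Corollary~\ref{c:liftbound}). The two points you spell out explicitly --- independence of the right-hand side from the choice of lifts via re-indexing under the orientation-preserving deck transformation $\sigma$, and removal of the combinatorial-length hypothesis via the symmetry $I^{+}_{\An}(\alpha,\beta)=I^{-}_{\An}(\beta,\alpha)$ --- are exactly the routine details the paper leaves implicit, and you handle them correctly.
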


\begin{remark}
Let $c$ be a single oriented cycle in $\An$ from $0$ to itself, i.e.\
a generator of the fundamental group at $0$. We note that it follows
from Proposition~\ref{p:intersectionnumbers} that, for $r,s>0$,
$I_{\An}^+(c^r,c^s)=I_{\An}^-(c^r,c^s)=\min(r,s)-1$ and $I_{\An}(c^r,c^s)=2\min(r,s)-2$.
\end{remark}

Recall the bijection between admissible arcs in $\An$ and
indecomposable objects in $\Tn$ from Lemma~\ref{l:ARiso}, taking
$\pi_n([a,b])$ to $M[a,b]$ for an admissible arc $\pi_n([a,b])$.
The object $M[a,b]$ has a unique composition series
(ordered from the socle upwards),
$S_{\overline{a+1}},S_{\overline{a+2}},\ldots ,S_{\overline{b-1}}$.
We now have all we need in order to show that the dimensions of extension groups (of degree $1$) in the tube can be interpreted as negative intersection numbers
of the corresponding admissible arcs in the annulus, our main result.

\begin{theorem} \label{t:extrule}
Let $\pi_n([a,b])$ and $\pi_n([c,d])$ be admissible arcs in $\An$.
Then
\begin{enumerate}
\item[(a)] $$\dim \Ext^1_{\Tn}(M[a,b],M[c,d])=I_{\An}^-(\pi_n([a,b]),\pi_n([c,d]));$$
\item[(b)] $$\dim \Ext^1_{\Tn}(M[c,d],M[a,b])=I_{\An}^+(\pi_n([a,b]),\pi_n([c,d]));$$
\item[(c)] $$\dim \Ext^1_{\CTn}(M[a,b],M[c,d])=I_{\An}(\pi_n([a,b]),\pi_n([c,d])).$$
\end{enumerate}
\end{theorem}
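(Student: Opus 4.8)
The plan is to deduce parts (a) and (b) by converting each extension group into a homomorphism group via the Auslander--Reiten duality $\Ext^1_{\Tn}(X,Y)\cong D\Hom_{\Tn}(Y,\tau X)$ recorded in Subsection~\ref{ssec:tubes}, computing the resulting $\Hom$-dimension combinatorially in the uniserial category $\Tn$, and then matching it against the explicit formulas of Proposition~\ref{p:intersectionnumbers}; part (c) will then follow formally. Concretely, for (a) I would use $\tau M[a,b]=M[a-1,b-1]$ to write $\dim\Ext^1_{\Tn}(M[a,b],M[c,d])=\dim\Hom_{\Tn}(M[c,d],M[a-1,b-1])$, and for (b) the same with the roles of the two arcs interchanged, giving $\dim\Ext^1_{\Tn}(M[c,d],M[a,b])=\dim\Hom_{\Tn}(M[a,b],M[c-1,d-1])$.

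The engine is a combinatorial formula for $\Hom$-dimensions in $\Tn$. Since $M[c,d]$ is uniserial with composition factors $S_{\overline{c+1}},\dots,S_{\overline{d-1}}$ from the socle upwards, every nonzero homomorphism $M[c,d]\to M[e,f]$ factors as a surjection onto a top quotient $M[c',d]$ followed by the inclusion of a bottom submodule $M[e,f']$ with $M[c',d]\cong M[e,f']$. Each such matching contributes a one-dimensional space of homomorphisms and distinct matchings contribute independent ones, so $\dim\Hom_{\Tn}(M[c,d],M[e,f])$ counts the integers $m$ for which $c'=e-mn$ and $f'=d+mn$ give genuine (nonzero) subquotients; unwinding the length and endpoint constraints yields $|\{m : e-d+2\le mn\le\min(e-c,f-d)\}|$. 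Substituting $(e,f)=(a-1,b-1)$ and using the standing hypothesis $d-c\ge b-a$ to evaluate the minimum as $b-1-d$, this rewrites as $|\{m : a<\sigma^m(d)<b\}|$, which by Proposition~\ref{p:intersectionnumbers}(b) is exactly $I_{\An}^-(\pi_n([a,b]),\pi_n([c,d]))$, proving (a). The substitution $(e,f)=(c-1,d-1)$ settles (b) against Proposition~\ref{p:intersectionnumbers}(a) in the same way.

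Two bookkeeping points remain. First, Proposition~\ref{p:intersectionnumbers} assumes $d-c\ge b-a$, so I would record the geometric fact that reversing the order of a pair of oriented arcs exchanges positive and negative crossings, giving $I_{\An}^-(\alpha,\beta)=I_{\An}^+(\beta,\alpha)$; the case $d-c<b-a$ then follows by relabelling the two arcs, which simultaneously interchanges statements (a) and (b), so proving them for pairs satisfying the hypothesis proves them in general. Second, for (c) I would invoke the decomposition $\Ext_{\CTn}(A,B)\cong\Ext_{\Tn}(A,B)\oplus D\Ext_{\Tn}(B,A)$ to obtain $\dim\Ext^1_{\CTn}(M[a,b],M[c,d])=\dim\Ext^1_{\Tn}(M[a,b],M[c,d])+\dim\Ext^1_{\Tn}(M[c,d],M[a,b])$, which by (a) and (b) equals $I_{\An}^-+I_{\An}^+=I_{\An}$ via Proposition~\ref{p:intersectionnumbers}(c).

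The \textbf{main obstacle} is establishing the $\Hom$-dimension formula rigorously: one must verify that a homomorphism of uniserial modules is determined up to scalar by its image, that the admissible matchings are enumerated without omission or repetition, and then translate the closed inequalities $e-d+2\le mn\le\min(e-c,f-d)$ into the strict inequalities $a<\sigma^m(d)<b$ of Proposition~\ref{p:intersectionnumbers}, keeping careful track of the off-by-one shifts introduced by $\tau$ and by the passage between closed and open endpoint conditions. Once this combinatorial identity is pinned down, the remainder of the argument is purely formal.
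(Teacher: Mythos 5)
Your proposal is correct and follows essentially the same route as the paper: both convert $\Ext^1_{\Tn}$ into a $\Hom$-space via the AR duality $\Ext^1_{\Tn}(X,Y)\cong D\Hom_{\Tn}(Y,\tau X)$, compute that $\Hom$-dimension combinatorially using uniseriality (maps determined up to scalar by a matching of a top quotient of the source with a submodule of the target), match the count against Proposition~\ref{p:intersectionnumbers}, and deduce (c) from the decomposition~\eqref{e:extsymmetrising}. The only organizational difference is that you handle the length hypothesis $d-c\ge b-a$ by a global relabelling that swaps statements (a) and (b) via $I^-_{\An}(\alpha,\beta)=I^+_{\An}(\beta,\alpha)$, whereas the paper splits the proof of (a) into two cases (top mapping onto a matching composition factor versus a composition factor mapping onto the socle), implicitly using the same symmetry.
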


\begin{proof}
We first prove (a). We have
\begin{eqnarray*}
\Ext^1_{\Tn}(M[a,b],M[c,d])
&\cong&
D\Hom_{\Tn}(M[c,d],\tau M[a,b]) \\
&\cong&
D\Hom_{\Tn}(M[c,d],M[a-1,b-1])
\end{eqnarray*}
Assume first that the combinatorial length of $[c,d]$ is greater than or equal to that of
$[a,b]$. Then a homomorphism in $\Hom_{\Tn}(M[c,d],M[a-1,b-1])$
must map the top,
$S_{\overline{d-1}}$, of $M[c,d]$ onto a composition factor $S_{\overline{d-1}}$ in $M[a-1,b-1]$ and is uniquely determined up to a scalar by this choice.
So the dimension of the last space above is the number of times
$\overline{d-1}$ appears in the sequence $[\overline{a},\overline{a+1},\ldots ,\overline{b-2}]$, i.e.\ the
number of times $\overline{d}$ appears in the sequence $\overline{a+1},\overline{a+2},\ldots ,\overline{b-1}$.
This is equal to $I_{\An}^-(\pi_n([a,b]),\pi_n([c,d]))$ by
Proposition~\ref{p:intersectionnumbers}.

If the combinatorial length of $[c,d]$ is less than or equal to that of $[a,b]$,
a homomorphism in $\Hom_{\Tn}(M[c,d],M[a-1,b-1]$
must map a composition factor $S_{\overline{a}}$ of $M[c,d]$ onto the socle
$S_{\overline{a}}$ of $M[a-1,b-1]$,
and is uniquely determined up to a scalar by this choice. Hence the dimension
of this $\Hom$-space is the number of times $\overline{a}$ appears in the
sequence $[\overline{c+1},\overline{c+2},\ldots ,\overline{d-1}]$, i.e.\ $I_{\An}^+(\pi_n([c,d]),\pi_n([a,b]))$ by
Proposition~\ref{p:intersectionnumbers}. This is the same as
$I_{\An}^-(\pi_n([a,b]),\pi_n([c,d]))$, as required. Part (a) is proved.

The proof of part (b) is similar. Part (c) follows from the fact that:
\begin{equation} \label{e:extsymmetrising}
\Ext_{\CTn}(M[a,b],M[c,d])\cong \Ext_{\Tn}(M[a,b],M[c,d])\oplus D\Ext_{\Tn}(M[c,d],M[a,b]),
\end{equation}
(proved as in~\cite[Prop. 1.7]{bmrrt}) where $D=\Hom_k(-,k)$ is the linear duality, and Proposition~\ref{p:intersectionnumbers}(c).
\end{proof}

\begin{remark}
In the light of Theorem~\ref{t:extrule},
Proposition~\ref{p:intersectionnumbers}(c) can be regarded as a geometric
version of equation~\eqref{e:extsymmetrising} for the cluster tube, i.e.\
passing to the cluster category (of the tube) corresponds to passing from
oriented to unoriented arcs in the annulus.
\end{remark}

Using the results~\cite[5.1,5.3]{bz}, (in the case of an annulus $X$ with a
single marked point on the inner boundary and $n$ marked points on the outer
boundary), we get that, in
the situation of Theorem~\ref{t:extrule}, $I_{\An}(\pi_n([a,b]),\pi_n([c,d]))\neq 0$ implies that
$\Ext^1_{\CTn}(M[a,b],M[c,d])\neq 0$ and $\Ext^1_{\CTn}(M[c,d],M[a,b])\neq 0$
(with equivalence in the case where $\pi_n([a,b])=\pi_n([c,d])$). We use the
fact (see~\cite[\S2]{bmv}) that the cluster tube is a thick subcategory of the cluster category associated to $X$. Thus Theorem~\ref{t:extrule}(c) can be
regarded as a strengthening of this result in the case of a tube. See also~\cite[4.23]{w} for an alternative proof of parts (a) and (b).

We also remark that a geometric interpretation of dimensions of
extension groups in cluster categories (i.e.\ a result analogous to
Theorem~\ref{t:extrule}(c)) has been obtained in type A~\cite[Rk.\ 5.3]{ccs},
in type D~\cite[Thm.\ 4.3]{sc} and in type $A_{\infty}$~\cite[Lemma 3.6]{hj} (see also
Section~\ref{s:Ainfinity}). Note that in the tube or cluster tube, and also in the setting
of~\cite{bz}, but not in~\cite{ccs,sc,hj}, extension groups between indecomposable objects
can have arbitrarily large dimension; in fact the dimension of the extension
group of an indecomposable object with itself can be arbitrarily large. Furthermore, all but finitely
many objects in the tube (or cluster tube) have non-zero self-extension groups,
corresponding to the fact that the corresponding oriented (respectively, unoriented) arcs
have non-zero self-intersection numbers.

Recall that an object $M$ in $\Tn$ is said to be \emph{rigid} if $\Ext_{\Tn}^1(M,M)=0$, and
\emph{maximal rigid} if $\Ext^1_{\Tn}(M\oplus X,M\oplus X)=0$ implies $X$ lies in $\add M$ for any
object $X$ in $\Tn$. By Theorem~\ref{t:extrule}(c), we see that an object of $\Tn$ is rigid if
and only if the admissible arcs corresponding to its indecomposable summands do not cross (themselves or each
other), and such an object is maximal rigid if and only if it is a maximal noncrossing collection of
admissible arcs.
We recall that in~\cite[\S2]{bu}, a general notion of triangulation is considered for marked surfaces
(such as $\An$) in which it is allowed for boundary components not to contain marked points.
The arcs in the triangulation, which must not be self-crossing, must divide up the surface into triangles (possibly self-folded) or annuli with a single marked point on the outer boundary.

\begin{prop}
The map $\varphi$ from admissible arcs in $\An$ to $\ind(\Tn)$ induces a bijection between
triangulations of $\An$ (in the sense of~\cite{bu}) and maximal rigid objects of $\ind(\Tn)$.
\end{prop}

\begin{proof}
It is clear that any triangulation of $\An$ must contain an arc dividing $\An$ into a disk
and one such annulus (since at least one component of the complement of the arcs in the triangulation
must have the inner boundary component of $\An$ on its boundary).
The remaining arcs in the triangulation triangulate the disk. Forgetting the arcs in the triangulation
homotopic to arcs along the boundary and orienting all arcs anticlockwise, it is clear we obtain a
maximal collection of noncrossing admissible arcs.

Conversely, any maximal collection of noncrossing admissible arcs must contain the arc from some
marked point on the outer boundary to itself, looping once around the inner boundary component (if it
doesn't, this arc can be added). As above, this divides $\An$ into a disk and an annulus of the above
kind. The remaining arcs form a maximal noncrossing collection of admissible arcs in the disk, and thus
triangulate it. It follows that, if we ignore the orientations and add all the boundary arcs (arcs between
adjacent marked points on the outer boundary, and the arc around the inner boundary), we obtain a triangulation.

It is clear that the above maps are inverses to each other. It follows that we have a bijection between triangulations of $\An$ and maximal noncrossing collections of admissible arcs and we are done.
\end{proof}

\section{Doubled arc model}

In~\cite[\S3]{bmv}, the additive subcategory of a tube of rank $n$ generated by
the rigid indecomposable objects is modelled by pairs of unoriented arcs in a regular $2n$-sided polygon.
We now indicate how this model is related to the model presented here.

For an integer $n\geq 2$, consider the annulus $\An$ as being embedded
in the complex plane with its centre at the origin with marked points
(on the outer boundary) at the $n$th roots of unity.
Then the map $\psi:z\mapsto z^2$ (followed by a scaling to map the new
inner boundary onto the same circle it was to start with)
induces a map from $\A(2n)$ to $\An$, mapping the
marked points of $\A(2n)$ onto those of $\An$. We see that
$\psi$ maps $i,i+n$ onto $i$ for each $i$. The preimage under $\psi$ of
$\pi_n([a,b])$ is $\pi_{2n}([a,b])\cup \pi_{2n}([a+n,b+n])$, up to homotopy
equivalence. This pair of oriented arcs corresponds to a pair of oriented arcs
in a regular $2n$-gon as in~\cite[\S3]{bmv} (if the orientation is dropped), except that here the diameters are represented as a pair.

\begin{prop} \label{p:doubledarcs}
Let $[a,b]$, $[c,d]$ be admissible arcs in $\U$. Then
\begin{enumerate}
\item[(a)]
$$I_{\A(2n)}^+(\psi^{-1}(\pi_n([a,b]),\psi^{-1}(\pi_n([c,d]))) =
2I_{\An}^+(\pi_n([a,b]),\pi_n([c,d])).$$
\item[(b)]
$$I_{\A(2n)}^-(\psi^{-1}(\pi_n([a,b]),\psi^{-1}(\pi_n([c,d]))) =
2I_{\An}^-(\pi_n([a,b]),\pi_n([c,d])).$$
\item[(c)]
$$I_{\A(2n)}(\psi^{-1}(\pi_n([a,b]),\psi^{-1}(\pi_n([c,d]))) =
2I_{\An}(\pi_n([a,b]),\pi_n([c,d])).$$
\end{enumerate}
\end{prop}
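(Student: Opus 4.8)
The plan is to deduce all three equalities from the explicit counting formulas of Proposition~\ref{p:intersectionnumbers}. First I would name the four component arcs. Writing $\sigma$ for the translation adding $n$ (so that $G=\langle\sigma\rangle$ and the covering group of $\A(2n)$ is $\langle\sigma^2\rangle$, adding $2n$), set $A_1=\pi_{2n}([a,b])$, $A_2=\pi_{2n}([a+n,b+n])$, $C_1=\pi_{2n}([c,d])$ and $C_2=\pi_{2n}([c+n,d+n])$. The discussion preceding the statement identifies $\psi^{-1}(\pi_n([a,b]))$ with $A_1\cup A_2$ and $\psi^{-1}(\pi_n([c,d]))$ with $C_1\cup C_2$ up to homotopy. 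Interchanging the two doubled arcs exchanges statements (a) and (b) via the relation $I^+_X(\alpha,\beta)=I^-_X(\beta,\alpha)$ (reversing the ordered pair of tangents flips every crossing sign), so I may assume throughout that $d-c\ge b-a$; then in each of the four ordered pairs $(A_i,C_j)$ the first arc has combinatorial length $b-a$ and the second has length $d-c\ge b-a$, so Proposition~\ref{p:intersectionnumbers} applies to each pair in $\A(2n)$.

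The first substantive step is additivity of the signed intersection numbers over connected components, i.e.
$$I^+_{\A(2n)}(A_1\cup A_2,\,C_1\cup C_2)=\sum_{i,j\in\{1,2\}} I^+_{\A(2n)}(A_i,C_j),$$
and the analogous identities for $I^-$ and $I$. The inequality $\ge$ is immediate, since in any admissible configuration the total number of positive crossings is at least the sum of the four pairwise minima. For the reverse inequality I would realise all four pairwise minima in a single picture, taking the ``tent'' representatives $\gamma\sim[a,b]$ and $\delta\sim[c,d]$ built in the proof of Proposition~\ref{p:intersectionnumbers}: the full $\pi_{2n}$-preimages of $A_1\cup A_2$ and of $C_1\cup C_2$ in $\U$ are exactly $G\gamma$ and $G\delta$, which are precisely the families already shown there to be transverse and free of multiple points. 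This is the step I expect to be the main obstacle, since it is essentially a simultaneous minimal position statement; the point to check is that drawing the two components of each doubled arc together neither creates nor cancels crossings, which I would argue from the transversality and multiplicity-free properties of $G\gamma$ and $G\delta$ together with the lower bound of Corollary~\ref{c:intersectionnumberlift}.

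Granting additivity, the rest is bookkeeping. Applying Proposition~\ref{p:intersectionnumbers}(a) in $\A(2n)$ to each ordered pair and shifting the defining inequalities gives
$$I^+_{\A(2n)}(A_1,C_1)=I^+_{\A(2n)}(A_2,C_2)=|\{k\in 2\Z:\ a<c+kn<b\}|,$$
$$I^+_{\A(2n)}(A_1,C_2)=I^+_{\A(2n)}(A_2,C_1)=|\{k\in 2\Z+1:\ a<c+kn<b\}|,$$
because the covering translation of $\A(2n)$ moves $c$ through the even multiples $c+kn$, while the extra shift by $n$ coming from $A_2$ or $C_2$ accounts for the odd multiples. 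Summing the four terms recombines the even and odd parts into a sum over all of $\Z$, so that
$$\sum_{i,j} I^+_{\A(2n)}(A_i,C_j)=2\,|\{k\in\Z:\ a<c+kn<b\}|=2\,I^+_{\An}(\pi_n([a,b]),\pi_n([c,d])),$$
the last equality being Proposition~\ref{p:intersectionnumbers}(a) in $\A(n)$. Together with additivity this proves (a). Part (b) is identical with $d$ replacing $c$ throughout (using Proposition~\ref{p:intersectionnumbers}(b)), and (c) follows by adding (a) and (b) and invoking Proposition~\ref{p:intersectionnumbers}(c) in both annuli.
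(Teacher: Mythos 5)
Your proposal is correct and follows essentially the same route as the paper's proof: additivity of the signed intersection numbers over the four component pairs, the counting formula of Proposition~\ref{p:intersectionnumbers} applied in $\A(2n)$ to each pair, and recombination of the even and odd $\sigma$-translates into $2I^{\pm}_{\An}$. The only differences are cosmetic: you justify additivity via the explicit tent representatives from the proof of Proposition~\ref{p:intersectionnumbers} rather than via additivity in $\U$ plus Corollary~\ref{c:intersectionnumberlift}, and you handle the hypothesis $d-c\ge b-a$ of Proposition~\ref{p:intersectionnumbers} by an explicit symmetry argument, a point the paper's proof passes over silently.
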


\begin{proof}
We note that for oriented arcs $\gamma,\gamma',\delta$ in $\U$ which do not self-intersect,
it is easy to see that
$$I_{\U}^+(\gamma\cup\gamma',\delta)=I_{\U}^+(\gamma,\delta)+I_{\U}^+(\gamma',\delta).$$

It follows from
Corollary~\ref{c:intersectionnumberlift} that $I^+_{\An}$ has the same
additivity property.
Let $[a,b]$, $[c,d]$ be admissible arcs in $\U$. Then:
\begin{equation*}
\begin{split}
I_{\A(2n)}^+(\psi^{-1}(\pi_n([a,b]),\psi^{-1}(\pi_n([c,d]))) &=
I_{\A(2n)}^+(\pi_{2n}([a,b]),\pi_{2n}([c,d])) \\
&\quad +I_{\A(2n)}^+(\pi_{2n}([a,b]),\pi_{2n}([c+n,d+n])) \\
&\quad +I_{\A(2n)}^+(\pi_{2n}([a+n,b+n]),\pi_{2n}([c,d])) \\
&\quad +I_{\A(2n)}^+(\pi_{2n}([a+n,b+n]),\pi_{2n}([c+n,d+n])).
\end{split}
\end{equation*}

Hence
\begin{equation*}
\begin{split}
I_{\A(2n)}^+(\psi^{-1}(\pi_n([a,b]),\psi^{-1}(\pi_n([c,d]))) &=
|\{m\in\mathbb{Z}\,:\,a<\sigma^{2m}(c)<b\}| \\
&\quad +|\{m\in\mathbb{Z}\,:\,a<\sigma^{2m+1}(c)<b\}| \\
&\quad +|\{m\in\mathbb{Z}\,:\,\sigma(a)<\sigma^{2m}(c)<\sigma(b)\}| \\
&\quad +|\{m\in\mathbb{Z}\,:\,\sigma(a)<\sigma^{2m+1}(c)<\sigma(b)\}|,
\end{split}
\end{equation*}
so
\begin{eqnarray*}
I_{\A(2n)}^+(\psi^{-1}(\pi_n([a,b]),\psi^{-1}(\pi_n([c,d]))) &=&
2I_{\An}^+(\pi_n([a,b]),\pi_n([c,d])),
\end{eqnarray*}
as required for (a). Part (b) is proved similarly, and part (c) follows.
\end{proof}

\begin{corollary}
Let $[a,b]$, $[c,d]$ be admissible arcs in $\U$.
Then
$$2\dim\Ext^1_{\CTn}(M[a,b],M[c,d])=I_{\A(2n)}(\psi^{-1}(\pi_n([a,b])),\psi^{-1}(\pi_n([c,d]))).$$
\end{corollary}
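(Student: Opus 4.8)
The plan is to combine the two principal computational results already established, since this corollary is essentially their juxtaposition. First I would invoke Theorem~\ref{t:extrule}(c), which identifies the dimension of the degree-$1$ extension group in the cluster tube with the geometric intersection number of the corresponding admissible arcs in the annulus:
\[
\dim\Ext^1_{\CTn}(M[a,b],M[c,d]) = I_{\An}(\pi_n([a,b]),\pi_n([c,d])).
\]
This rewrites (twice) the left-hand side of the claimed identity entirely in geometric terms.

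Next I would apply Proposition~\ref{p:doubledarcs}(c), which records the precise doubling behaviour of the intersection number under the preimage map $\psi^{-1}$ from $\An$ to $\A(2n)$:
\[
I_{\A(2n)}(\psi^{-1}(\pi_n([a,b])),\psi^{-1}(\pi_n([c,d]))) = 2\,I_{\An}(\pi_n([a,b]),\pi_n([c,d])).
\]
Substituting the first displayed equation into the right-hand side of the second immediately yields
\[
I_{\A(2n)}(\psi^{-1}(\pi_n([a,b])),\psi^{-1}(\pi_n([c,d]))) = 2\dim\Ext^1_{\CTn}(M[a,b],M[c,d]),
\]
which is exactly the assertion.

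There is essentially no obstacle here: the corollary is a formal consequence of results proved earlier in the paper, obtained by chaining the two displayed equations. The only point worth verifying is that both cited statements apply to an arbitrary pair of admissible arcs, with no ordering assumption on their combinatorial lengths. This is the case, since Theorem~\ref{t:extrule} and Proposition~\ref{p:doubledarcs} are both stated in full generality for admissible arcs $[a,b]$ and $[c,d]$ in $\U$; the combinatorial-length hypothesis enters only inside the proof of Proposition~\ref{p:intersectionnumbers} and has already been absorbed into those results via the symmetry of the intersection number. Hence no case analysis is required and the two equations may be combined directly.
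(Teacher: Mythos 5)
Your proof is correct and is exactly the paper's own argument: the paper proves this corollary by citing Proposition~\ref{p:doubledarcs}(c) together with Theorem~\ref{t:extrule}, which is precisely the chaining of the two displayed equations you give. Your extra remark that no combinatorial-length case analysis is needed (since both cited results are stated for arbitrary admissible arcs) is accurate and harmless.
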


\begin{proof}
This follows from Proposition~\ref{p:doubledarcs}(c) and Theorem~\ref{t:extrule}.
\end{proof}

We thus recover~\cite[3.2]{bmv} in the case where $\pi_n([a,b])$ and
$\pi_n([c,d])$ are non self-intersecting, i.e.\ the corresponding indecomposable objects
in $\Tn$ are rigid.

\section{AR-quivers of type $\mathbb{Z}A_{\infty}$} \label{s:Ainfinity}

By~\cite[III.1.1]{rv}, the category $\Tinf$ of finite
dimensional representations of the quiver $A_{\infty}^{\infty}$:

$$\xymatrix{
\cdots -3 \ar[r] & -2 \ar[r] & -1 \ar[r] & 0 \ar[r] & 1 \ar[r] & 2 \ar[r] & 3 \cdots
}$$
is a connected Hom-finite noetherian hereditary abelian category with almost split
sequences whose AR-quiver is of type $\mathbb{Z}A_{\infty}$. Hence one can form
the cluster category $\CTinf$ of $\Tinf$ as the quotient
$D^b(\Tinf)/(\tau^{-1}[1])$; see~\cite[2.1]{kr1}.
Since $\Tinf$ is skeletally small, it follows from~\cite[9.9]{k1}
that it is triangulated and thus also $2$-Calabi-Yau.
As pointed out to us by P. Jorgensen, it follows from~\cite[Thm. 2.1]{kr2}
that this category coincides with the category $D$ considered by Holm and
Jorgensen in~\cite{hj} (referred to there as a cluster category of type
$A_{\infty}$).

The indecomposable objects in $\Tinf$ (and thus, also, in $\CTinf$)
are, up to isomorphism, of the form
$X[a,b]$ where $[a,b]$ is an admissible arc in $\U$. The representation
$X[a,b]$ has a unique composition series (ordered from the socle upwards)
given by $S_{a+1},S_{a+2},\ldots ,S_{b-1}$. There is
an irreducible map in $\Tinf$ from $X[a,b]$ to $X[c,d]$, for admissible arcs
$[a,b]$ and $[c,d]$ in $\U$, if and only if $c=a$ and $d=b+1$ or $c=a+1$ and $d=b$ (compare with Subsection~\ref{ss:quiverofarcs}). The AR-translate takes
$X[a,b]$ to $X[a-1,b-1]$. The irreducible maps and AR-translate have the same
description in $\Tinf$ and in $\CTinf$; see~\cite[1.3]{bmrrt}.

\begin{theorem} \label{t:extruleinfinity}
Let $[a,b]$ and $[c,d]$ be admissible arcs in $U$. Then
\begin{enumerate}
\item[(a)] $$\dim \Ext^1_{\Tinf}(X[a,b],X[c,d])=I_{\U}^-([a,b],[c,d]);$$
\item[(b)] $$\dim \Ext^1_{\Tinf}(X[c,d],X[a,b])=I_{\U}^+([a,b],[c,d]);$$
\item[(c)] $$\dim \Ext^1_{\CTinf}(X[a,b],X[c,d])=I_{\U}([a,b],[c,d]).$$
\end{enumerate}
\end{theorem}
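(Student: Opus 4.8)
The plan is to follow the proof of Theorem~\ref{t:extrule} almost verbatim, exploiting the fact that $\U$ now plays the role that the annulus $\An$ played before, except that $\U$ is the ambient surface itself rather than a cover of it. In particular there is no covering map to pass through, so no analogue of the summation in Corollary~\ref{c:intersectionnumberlift} is needed: the relevant intersection numbers are computed directly in $\U$, and I expect them all to be $0$ or $1$, which makes the argument genuinely simpler than in the finite-rank case.

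For part (a) I would begin with the Auslander--Reiten formula
$$\Ext^1_{\Tinf}(X[a,b],X[c,d]) \cong D\Hom_{\Tinf}(X[c,d],\tau X[a,b]) = D\Hom_{\Tinf}(X[c,d],X[a-1,b-1]),$$
which holds in $\Tinf$ since it is hereditary with almost split sequences and has no nonzero projective or injective objects (so that $\overline{\Hom}=\Hom$), together with the description $\tau X[a,b]=X[a-1,b-1]$ recalled before the statement. I would then compute the right-hand $\Hom$-space from the composition series $S_{a+1},\dots,S_{b-1}$: a nonzero homomorphism $X[c,d]\to X[a-1,b-1]$ factors through its image, which must simultaneously be the quotient of $X[c,d]$ with top $S_{d-1}$ and the submodule of $X[a-1,b-1]$ with socle $S_a$, and is then determined up to a scalar. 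Since there is no modular reduction in $\Tinf$ (each simple occurs at most once in each indecomposable), this forces the $\Hom$-space to be at most one-dimensional, of dimension $1$ precisely when $c<a<d<b$; note that, unlike in Theorem~\ref{t:extrule}, no splitting into length cases is required here.

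It then remains to identify $c<a<d<b$ with the condition $I_{\U}^-([a,b],[c,d])=1$. I would do this using the explicit ``arch'' representatives from the proof of Proposition~\ref{p:intersectionnumbers} (each consisting of two line segments rounded at the apex): two such single-humped arcs in the strip $\U$ cross at most once, and the crossing, when present, is negative exactly when the endpoints interleave as $c<a<d<b$. These are precisely the individual summands $I_{\U}^-(\widetilde{\alpha},\sigma^m(\widetilde{\beta}))$ occurring in Corollary~\ref{c:intersectionnumberlift}, now used directly rather than summed. Part (b) is entirely symmetric, via $\tau X[c,d]=X[c-1,d-1]$, giving $\dim\Ext^1_{\Tinf}(X[c,d],X[a,b]) = [a<c<b<d] = I_{\U}^+([a,b],[c,d])$. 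For part (c) I would invoke the $2$-Calabi--Yau decomposition
$$\Ext^1_{\CTinf}(X[a,b],X[c,d]) \cong \Ext^1_{\Tinf}(X[a,b],X[c,d]) \oplus D\Ext^1_{\Tinf}(X[c,d],X[a,b]),$$
proved as in~\cite[Prop.~1.7]{bmrrt}, and combine (a) and (b) with the identity $I_{\U}=I_{\U}^+ + I_{\U}^-$, which is immediate here since the two arcs meet in at most one point and that single meeting is counted with a definite sign.

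The main obstacle I anticipate is not the combinatorics but the input to it: justifying the Auslander--Reiten formula in the non-finite setting of $\Tinf$ (confirming the absence of finite-dimensional projective and injective objects, so that the correction term vanishes and $\tau$ acts as stated), and fixing the orientation conventions so that the lone crossing is correctly assigned a negative rather than a positive sign. Once these are settled, the whole computation collapses to the clean $0/1$ dichotomy recorded above.
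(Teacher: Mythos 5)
Your proposal is correct and follows essentially the same route as the paper: the paper's proof of Theorem~\ref{t:extruleinfinity} simply says it is proved in the same way as Theorem~\ref{t:extrule} with the reduction modulo $n$ omitted (noting that formula~\eqref{e:extsymmetrising} still holds), which is exactly the argument you spell out -- the AR-formula, the $\Hom$-space computation via composition series, the identification of interleaving endpoints with signed crossings, and the $2$-Calabi--Yau decomposition for part (c). Your added observations (no case split on combinatorial length, all intersection numbers being $0$ or $1$, and the absence of nonzero projectives/injectives in $\Tinf$ justifying $\overline{\Hom}=\Hom$) are accurate refinements of that same proof rather than a different approach.
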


\begin{proof}
This result is proved in the same way as Theorem~\ref{t:extrule} (except that
the reduction modulo $n$ is omitted). Note that the same proof of
formula~\eqref{e:extsymmetrising} holds in this context.
\end{proof}

\noindent \textbf{Acknowledgements:}
Karin Baur was supported by the Swiss National Science Foundation (grant PP0022-114794).
This work was supported by the Engineering and Physical Sciences Research Council
[grant numbers EP/S35387/01 and EP/G007497/1] and the
Institute for Mathematical Research (FIM, Forschungsinstitut f\"ur
Mathematik) at the ETH, Z\"{u}rich.

We would like to thank Bernhard Keller and Matthias \linebreak
Warkentin for their
helpful comments. Robert Marsh would like to thank Karin Baur and the Institute for
Mathematical Research (FIM) at the
ETH, Zurich, for their kind hospitality during visits in June 2009 and
August-September 2010. We would also like to thank the referees for their
helpful comments on an earlier version of this paper.

\end{document}